\newcommand{\Real}{\mathbb R}
\newcommand{\Nat}{\mathbb N}
\newcommand{\norm}[1]{\left\Vert#1\right\Vert}
\newcommand{\abs}[1]{\left\vert#1\right\vert}
\newcommand{\bigabs}[1]{\bigl\vert#1\bigr\vert}
\renewcommand{\phi}{\varphi}
\newcommand{\eps}{\varepsilon}
\renewcommand{\ge}{\geqslant}
\renewcommand{\le}{\leqslant}
\renewcommand{\liminf}{\underline{\lim}}
\newcommand{\card}{{\rm card}}
\newtheorem{thm}{Theorem}
\newtheorem{prop}{Proposition}
\newtheorem{lm}{Lemma}
\newtheorem{rem}{Remark}
\newtheorem{cor}{Corollary}
\newcommand{\Wf}{\stackrel{o\ }{W_1^1}}
\newcommand{\W}{W_1^1}
\newcommand{\sign}{\mathop{\rm sign}\nolimits}
\newcommand{\dist}{\mathop{\rm dist}\nolimits}
\newcounter{pictureCounter}
\begin{document}

\title{On monotonicity of some functionals \\ under rearrangements}
\author{
S.V.~Bankevich
\footnote{JetBrains; Sergey.Bankevich@gmail.com}
\and
A.I.~Nazarov
\footnote{St.Petersburg Dept of Steklov Institute and St.Petersburg State University; al.il.nazarov@gmail.com}
}

\maketitle

\section{Introduction}

First, we recall the layer cake representation for a measurable function $u: [-1, 1] \to \Real_+$
(here and elsewhere $\Real_+ = [0,\infty)$).
Namely, if we set $\mathcal{A}_t: = \{x \in [-1,1]:\ u(x)> t \}$
then $u(x) = \int_0^\infty \chi_{\mathcal{A}_t} \, dt$.

We define the monotone rearrangement of a measurable set $E \subset [-1, 1]$ and the
monotone rearrangement of a non-negative function $u \in \W (-1, 1)$ as follows:
\begin{eqnarray*}
E^*: = [1 - \abs{E}, 1]; \qquad
u^*(x): = \int\limits_0^\infty \chi_{\mathcal{A}_t^*} \, dt.
\end{eqnarray*}

Under the same conditions we define the symmetric rearrangement 
(symmetrization) for sets and functions:
\begin{eqnarray*}
\overline{E} := [-\frac{\abs{E}}{2}, \frac{\abs{E}}{2}]; \qquad
\overline{u}(x) := \int\limits_0^\infty \chi_{\overline{\mathcal{A}_t}} \, dt.
\end{eqnarray*}

We denote by $\mathfrak{F}$ the set of continuous functions 
$F: \Real_+ \times \Real_+ \to \Real_+,$
which are convex and increasing with respect to the second argument.

Let us consider a functional
\begin{equation}
\label{functional}
I(\mathfrak a, u) = \int\limits_{-1}^1 F\big(u(x), \mathfrak a(x, u(x)) \abs{u'(x)}\big) \, dx,
\end{equation}
where $\mathfrak a: [-1, 1] \times \Real_+ \to \Real_+$ is a continuous function, $F \in \mathfrak{F}$.

It is well known that if $\mathfrak a \equiv const$ then the P\'olya--Szeg\"o type inequalities
\begin{eqnarray}
\label{toprove}
I(\mathfrak a, u^*) & \le & I(\mathfrak a, u), \qquad \qquad u \in \W(-1, 1);\\
\label{toproveSymm}
I(\mathfrak a, \overline{u}) & \le & I(\mathfrak a, u), \qquad \qquad u \in \Wf(-1, 1)
\end{eqnarray}
hold, see for example \cite{Kawohl} and references therein.

The inequality (\ref{toproveSymm}) and its multi-dimensional analogue
are proved in \cite{Br} provided that the function $\mathfrak a$ is even and convex 
with respect to $x$. However, the proof contains a gap,
and in fact this inequality was proved in \cite{Br} only for Lipschitz functions $u$.

Namely, while proving the inequality (\ref{toproveSymm}) for a natural class of functions,
the author of \cite{Br} approximates $u \in \Wf$ with finite integral (\ref{functional})
using piecewise linear functions $u_k$ and claims that $I(\mathfrak a, u_k) \to I(\mathfrak a, u)$.
However, this assertion is not justified and generally speaking is not true.
In 1926, M.A.~Lavrentiev proposed the first example of an integral functional
for which the infimum over the domain is strictly less than the infimum over the set of 
Lipschitz functions.
Historical overview and simple examples of ``one-dimensional'' 
functionals for which the Lavrentiev phenomenon takes place can be found e.g. in \cite{BGH}.
Note that a deep investigation of the Lavrentiev phenomenon for some classes of multidimensional 
functionals was carried out by V.V.~Zhikov (see, e.g., \cite{Zh1}, \cite{Zh2}).

In the paper \cite{ASC} the absence of the Lavrentiev phenomenon was proved for the functionals
$I(\mathfrak a, u) = \int_{-1}^1 F(u, u')$. Moreover it was shown that for every $u \in \W(-1, 1)$ 
there exists a sequence of Lipschitz functions $u_k$, such that
\begin{equation}
\label{convergence}
u_k \to u \text{ in } \W (-1, 1) \quad \text{ and } \quad I (\mathfrak a, u_k) \to I (\mathfrak a, u).
\end{equation}

We modify the proof from \cite{ASC} and prove the absence of the Lavrentiev phenomenon
for the functionals of the form (\ref{functional}).
This allows us to fill the gap in the proof from \cite{Br} in one-dimensional case.
In addition we prove that evenness and convexity of the weight is a necessary condition
for the inequality (\ref{toproveSymm}) to hold.

The bulk of our paper is devoted to the inequality (\ref{toprove}).
We find necessary and sufficient conditions on the weight $\mathfrak a$ for the inequality 
(\ref{toprove}) to hold%
\footnote{In particular, the inequality is satisfied if the weight function $\mathfrak a$ is even 
and concave in $x$.}.
Under certain additional assumptions this result was announced in \cite{DAN}.

We note also that the inequality (\ref{toprove}) was considered in \cite{Lan}
for functionals similar to (\ref{functional}) under additional constraint $u(-1) = 0$.
We obtain necessary and sufficient conditions for (\ref{toprove}) under this constraint.
(The author of \cite{Lan} assumed the weight $\mathfrak a$ decreasing in $x$.)

The article is divided into 8 sections.
In Section 2 we deduce the assumptions on the weight function $\mathfrak a$ which are necessary for the 
inequality (\ref{toprove}).
Auxiliary statements for weights satisfying necessary conditions are established in Section 3.
In Section 4 the inequality (\ref{toprove}) is proved for piecewise linear functions $u$.
In Section 5 we present the scheme for proving inequality (\ref{toprove}) for a wider class of 
functions $u$.
In Section 6 we prove inequality (\ref{toprove}), provided that the weight $\mathfrak a$ first increases, 
then decreases.
Section 7 is devoted to the proof of (\ref{toprove}) under necessary conditions only.
Finally, in the Section 8 we deal with symmetric rearrangement.
There we obtain necessary conditions on the weight and complete the proof of (\ref{toproveSymm}).

\section{The conditions necessary for the inequality (\ref{toprove})}

\begin{thm}
\label{necessary}
{\bf 1}. Let the inequality (\ref{toprove}) hold for some $F \in \mathfrak {F}$
and arbitrary piecewise linear $u$. Then the weight function $\mathfrak a$ is even with respect to the first argument,
that is $\mathfrak a(x, v) \equiv \mathfrak a(-x, v)$.

{\bf 2}. Let the inequality (\ref{toprove}) hold for arbitrary $F \in \mathfrak{F}$
and arbitrary piecewise linear $u$. Then the weight function $\mathfrak a$ satisfies
\begin{equation}
\label{almostConcave}
\mathfrak a(s, v) + \mathfrak a(t, v) \ge \mathfrak a(1 - t + s, v), \qquad -1 \le s \le t \le 1, v \in \Real_+.
\end{equation}
\end{thm}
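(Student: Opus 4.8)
The plan is to probe (\ref{toprove}) with explicitly constructed piecewise linear functions whose monotone rearrangement can be written down, and then to localize the resulting inequality both in $x$ and in the level $v$ so as to recover pointwise information on $\mathfrak a$.

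For part \textbf{1}, the starting observation is that if $u$ is non-decreasing then $u^*=u$, whereas the reflection $\check u(x):=u(-x)$ is equimeasurable with $u$ and hence also satisfies $\check u^*=u$; so applying (\ref{toprove}) to $\check u$ gives $I(\mathfrak a,u)=I(\mathfrak a,\check u^*)\le I(\mathfrak a,\check u)$ for every non-decreasing piecewise linear $u$. I would apply this to the function $u=u_{\eta,\delta}$ equal to $v_0-\eta$ on $[-1,x_0]$, equal to $v_0+\eta$ on $[x_0+\delta,1]$ and affine (slope $2\eta/\delta$) on $[x_0,x_0+\delta]$, where $x_0\in(-1,1)$, $v_0>0$ and $\eta\in(0,v_0)$, $\delta>0$ are small. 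On the two flat pieces the integrand carries the factor $F(\cdot,0)$ and contributes equally to $I(\mathfrak a,u)$ and $I(\mathfrak a,\check u)$, so after the change of variables $v=u(x)$ the inequality reduces to
\[
\int_{v_0-\eta}^{v_0+\eta}F\!\big(v,\mathfrak a(x_\delta(v),v)\tfrac{2\eta}{\delta}\big)\,dv
\le\int_{v_0-\eta}^{v_0+\eta}F\!\big(v,\mathfrak a(\tilde x_\delta(v),v)\tfrac{2\eta}{\delta}\big)\,dv,
\]
with $x_\delta(v)\to x_0$ and $\tilde x_\delta(v)\to -x_0$ uniformly on $[v_0-\eta,v_0+\eta]$ as $\delta\to0$. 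If $\mathfrak a(x_0,v_0)>\mathfrak a(-x_0,v_0)$, then for $\eta$ and $\delta$ small $\mathfrak a(x_\delta(v),v)\ge\mathfrak a(\tilde x_\delta(v),v)+c$ on $[v_0-\eta,v_0+\eta]$ for some $c>0$, and since every $F\in\mathfrak F$ is \emph{strictly} increasing in its second argument the left integrand strictly exceeds the right one at each $v$, a contradiction. Hence $\mathfrak a(x_0,v_0)\le\mathfrak a(-x_0,v_0)$; replacing $x_0$ by $-x_0$ yields the reverse inequality, and the cases $x_0=\pm1$, $v_0=0$ follow by continuity of $\mathfrak a$.

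For part \textbf{2}, I would use a \emph{non-monotone} test function producing a two-interval superlevel set. For $-1<s\le t<1$ and $v_0>0$, let $u=u_{\eta,\delta}$ equal $v_0+\eta$ on $[-1,s-\delta]\cup[t+\delta,1]$, equal $v_0-\eta$ on $[s,t]$, and affine on $[s-\delta,s]$ and on $[t,t+\delta]$. Then for $v\in(v_0-\eta,v_0+\eta)$ the set $\{u>v\}$ is a union of two intervals of total length $2+x_1(v)-x_2(v)$ with $x_1(v)\to s$, $x_2(v)\to t$ as $\delta\to0$, so $u^*$ takes the value $v$ at $x^*(v)=1-|\{u>v\}|\to t-s-1$. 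Choosing the particular weight $F(v,w)=w\in\mathfrak F$, the change of variables gives $I(\mathfrak a,u)=\int_{v_0-\eta}^{v_0+\eta}\big(\mathfrak a(x_1(v),v)+\mathfrak a(x_2(v),v)\big)\,dv$ and $I(\mathfrak a,u^*)=\int_{v_0-\eta}^{v_0+\eta}\mathfrak a(x^*(v),v)\,dv$; thus (\ref{toprove}) is an honest integral inequality, and letting $\delta\to0$, then dividing by $2\eta$ and letting $\eta\to0$, yields $\mathfrak a(t-s-1,v_0)\le\mathfrak a(s,v_0)+\mathfrak a(t,v_0)$. Since $t-s-1=-(1-t+s)$ and $\mathfrak a$ is even by part \textbf{1} (whose hypothesis is subsumed by that of part \textbf{2}), this is precisely (\ref{almostConcave}); the boundary cases $s=-1$, $t=1$, $s=t$, $v_0=0$ follow by continuity.

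The only genuinely delicate point is the passage $\delta\to0$, where the slope $2\eta/\delta$ of the test function blows up: one must check that the flat pieces cancel \emph{exactly} rather than merely becoming negligible, and that $\mathfrak a(x_\delta(v),v)$ tends to its limit \emph{uniformly} in $v$ on the relevant compact interval, so that this survives composition with $F(v,\,\cdot\,/\delta)$. In part \textbf{1} this is where strict monotonicity of $F$ has to be exploited quantitatively (a strict gap between values of $\mathfrak a$ must be upgraded to a strict gap between integrands, hence between integrals); in part \textbf{2} the explicit choice $F(v,w)=w$ makes every term linear and the difficulty disappears, the real content there being the geometric computation of $u^*$ for the two-bump function.
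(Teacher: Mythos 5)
Your proposal is correct; Part 1 is essentially the paper's own argument, while Part 2 takes a genuinely different route. For Part 1 you compare a function whose variation is localized near $x_0$ with its reflection and use that the monotone function coincides with its rearrangement; the paper does the same with a single ramp of slope one (no limit in $\delta$ needed), and both arguments, like the statement itself, read ``increasing'' in the definition of $\mathfrak{F}$ as strict. For Part 2 the paper tests with a bump above the base level $\bar v$: after rearrangement the two ramps of slope $\pm1$ merge into a single ramp of slope $1/2$ located exactly at $1-t+s$, and the contradiction is obtained on a fixed small box where (\ref{almostConcave}) fails by a margin $\delta$, using the strictly convex choice $F(v,p)=p^{\alpha}$ with $\alpha>1$; no evenness is invoked, which is precisely what lets the same construction serve, in Remark \ref{landesNecessary}, for the pinned case $u(-1)=0$ where evenness is not available. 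You instead use a valley touching both endpoints, take the single linear $F(v,w)=w$ so that the coarea change of variables makes both sides exactly linear in $\mathfrak a$, and localize by letting $\delta\to0$ and then $\eta\to0$; the price is that the rearranged transition sits at $t-s-1=-(1-t+s)$, so you must appeal to the evenness established in Part 1 to arrive at (\ref{almostConcave}) as stated. This is legitimate, since the hypothesis of Part 2 contains that of Part 1, but it does mean your Part 2 argument by itself would not yield the pinned-endpoint variant of the necessity. In exchange, your version is more elementary: no convexity or Jensen-type step, no tuning of $\alpha$, and the flat pieces contribute nothing because $F(v,0)=0$ for your choice of $F$, whereas the paper's approach buys independence from evenness and shows the obstruction survives even within strictly convex integrands.
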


\begin{proof}
{\bf 1.} Suppose that $\mathfrak a(x, v) \not \equiv \mathfrak a(-x, v)$.
Then there are $\bar{x} \in (-1, 1 )$ and $\bar{v} \in \Real_+$ such that
$$\mathfrak a(\bar{x}, \bar{v}) < \mathfrak a(-\bar{x}, \bar{v}).$$
Therefore, there is $\eps> 0$ such that
$$\bar{x} - \eps \le x \le \bar{x}, \bar{v} \le v \le \bar{v} + \eps \quad \Longrightarrow \quad \mathfrak a(x, v) < \mathfrak a(-x, v).$$
Now we introduce the following function:
$$
\left\{     
\begin{aligned}
u(x) &= \bar{v} + \eps, & x \in [-1,\bar{x}-\eps]\\
u(x) &= \bar{v} + \bar{x} - x, & x \in (\bar{x} - \eps, \bar{x})\\
u(x) &= \bar{v}, & x \in [\bar{x}, 1]
\end{aligned}
\right.
$$
Then $u^*(x, v) = u(-x, v)$ and
\begin{multline*}
I(\mathfrak a, u)-I(\mathfrak a, u^*) \\
= \int\limits_{\bar{x}-\eps}^{\bar{x}} F\big( \bar{v} + \bar{x} - x, \mathfrak a(x, \bar{v} + \bar{x} - x) \big) \, dx -
\int\limits_{-\bar{x}}^{-\bar{x}+\eps} F\big( \bar{v} + \bar{x} + x, \mathfrak a(x, \bar{v} + \bar{x} + x) \big) \, dx \\
= \int\limits_{\bar{x}-\eps}^{\bar{x}} \big( F\big( \bar{v} + \bar{x} - x, \mathfrak a(x, \bar{v} + \bar{x} - x) \big) -
F\big( \bar{v} + \bar{x} - x, \mathfrak a(-x, \bar{v} + \bar{x} - x) \big) \big) \, dx < 0,
\end{multline*}
which contradicts the assumption. Thus, the first statement is proved.

{\bf 2.} Suppose that the assumption (\ref{almostConcave}) is not satisfied.
Then, by continuity of $\mathfrak a$, there exist $-1 \le s \le t \le 1$, $\eps, \delta> 0$ and $\bar{v} \in \Real_+$, such that
for any $0 \le y \le \eps$ and $\bar{v} \le v \le \bar{v} + \eps$ the following inequality holds:
$$\mathfrak a(s + y, v) + \mathfrak a(t - y, v) + \delta < \mathfrak a( 1 - t + s + 2y, v).$$

Consider the function $u$ (see fig. \ref{uGraph}):
\begin{equation}
\label{parLinU}
\left\{     
\begin{aligned}
u(x) &= \bar{v}, & x \in [-1, s] \cup [t, 1]\\
u(x) &= \bar{v} + x - s, & x \in [s, s + \eps]\\
u(x) &= \bar{v} + \eps, & x \in [s + \eps, t - \eps]\\
u(x) &= \bar{v} + t - x, & x \in [t - \eps, t]
\end{aligned}
\right.
\end{equation}

\begin{center}
\begin{picture}(200,90)
\refstepcounter{pictureCounter}
\label{uGraph}
\put(10,65){\line(1,0){50}}
\put(60,65){\line(1,1){10}}
\put(70,75){\line(1,0){40}}
\put(110,75){\line(1,-1){10}}
\put(120,65){\line(1,0){70}}
\put(0,25){\vector(1,0){200}}
\put(100,15){\vector(0,1){80}}
\put(99,65){\line(1,0){2}}
\put(92,62){$\bar{v}$}
\put(60,24){\line(0,1){2}}
\put(58,14){$s$}
\put(120,24){\line(0,1){2}}
\put(119,14){$t$}
\put(10,24){\line(0,1){2}}
\put(6,14){$-1$}
\put(190,24){\line(0,1){2}}
\put(188,14){$1$}
\put(20,70){$u(x)$}
\put(85,1){Fig. \arabic{pictureCounter}}
\end{picture}
\end{center}
Then
$$
\left\{     
\begin{aligned}
u^*(x) &= \bar{v}, & x \in [-1, 1 - t + s]\\
u^*(x) &= \bar{v} + \frac{ x - ( 1 - t + s ) }{2}, & x \in [1 - t + s, 1 - t + s + 2\eps]\\
u^*(x) &= \bar{v} + \eps, & x \in [1 - t + s + 2\eps, 1]
\end{aligned}
\right.
$$
(see fig. \ref{uStarGraph}).

\begin{center}
\begin{picture}(200,90)
\refstepcounter{pictureCounter}
\label{uStarGraph}
\put(10,65){\line(1,0){120}}
\put(130,64){\line(2,1){20}}
\put(150,75){\line(1,0){40}}
\put(0,25){\vector(1,0){200}}
\put(100,15){\vector(0,1){80}}
\put(99,65){\line(1,0){2}}
\put(92,67){$\bar{v}$}
\put(130,24){\line(0,1){2}}
\put(110,14){$1 - t + s$}
\put(10,24){\line(0,1){2}}
\put(6,14){$-1$}
\put(190,24){\line(0,1){2}}
\put(188,14){$1$}
\put(20,70){$u^*(x)$}
\put(85,1){Fig. \arabic{pictureCounter}}
\end{picture}
\end{center}

We have
\begin{multline*}
I(\mathfrak a, u^*) = \int\limits_0^{2\eps} F \big( u(1 - t + s + z), \frac{\mathfrak a(1 - t + s + z, u(1 - t + s + z))}{2} \big) \, dz\\
= \int\limits_0^\eps 2 F \big(\bar{v} + y, \frac{\mathfrak a(1 - t + s + 2y, \bar{v} + y)}{2} \big) \, dy\\
0 \le I( \mathfrak a, u ) - I( \mathfrak a, u^* ) =
\int\limits_0^\eps \big( F\big(\bar{v} + y, \mathfrak a(s + y, \bar{v} + y)\big) + F\big(\bar{v} + y, \mathfrak a( t - y, \bar{v} + y)\big)\\
- 2 F \big(\bar{v} + y, \frac{ \mathfrak a(1 - t + s + 2y, \bar{v} + y) }{2} \big) \big) \, dy\\
< \int\limits_0^\eps \big( F\big(\bar{v} + y, \mathfrak a(s + y, \bar{v} + y)\big) + F\big(\bar{v} + y, \mathfrak a(t - y, \bar{v} + y)\big)\\
- 2 F \big( \bar{v} + y, \frac{ \mathfrak a(s + y, \bar{v} + y) + \mathfrak a(t - y, \bar{v} + y) + \delta }{2} \big) \big) \, dy =: J.
\end{multline*}

Let us consider the function $F(v, p) = p ^ \alpha$.
For $\alpha = 1$, the following inequality trivially holds:
\begin{equation}
\label{anticonvex}
\frac{F(v, p) + F(v, q)}{ 2 } - F\big(v, \frac{p + q}{ 2 } + \frac{\delta}{ 2 }\big) <0.
\end{equation}
We are interested in $p, q$ from the compact $[0 , A]$,
where 
\begin{equation}
\label{weightMax}
A=\max \limits_{(x, v)} \mathfrak a,\qquad (x, v) \in [-1, 1 ] \times u([-1, 1] ).
\end{equation}
Therefore, there is an $\alpha> 1$, for which the inequality (\ref{anticonvex})
still holds.
For example, any $1 < \alpha < (\log_2 \frac{ 2 A}{A + \delta})^{-1}$ is suitable.

Thus, we obtain a function $F$ strictly convex with respect to the second argument
for which $J \le 0$. This contradiction proves the second statement.
\end{proof}

\begin{rem}
\label{landesNecessary}
It can be seen that proving the second statement of Theorem \ref{necessary}
one can replace the function $u$ on the interval $[-1, s]$ by any increasing function.
Thus, in the case where $u$ is pinned at the left end {\rm ($u(-1) = 0$)}
the assumption (\ref{almostConcave}) is also necessary for the inequality (\ref{toprove}) to hold.
\end{rem}

\begin{rem}
Let $\mathfrak a(\cdot, v)$ be even.
Then the assumption (\ref{almostConcave}) is equivalent to subadditivity of the function $\mathfrak a(1 - \cdot, v)$.
In particular, if a non-negative function $\mathfrak a$ is even and concave with respect to the first argument then it satisfies the assumption (\ref{almostConcave}).
\end{rem}

\section{Properties of the weight function}

For brevity, in this section we omit the second argument of the function $\mathfrak a$.
Thus, we assume, that $\mathfrak a \in C[-1, 1]$ and $\mathfrak a \ge 0$.

\begin{lm}
\label{weightSum}
Let $\mathfrak a$ satisfy (\ref{almostConcave}).

{\bf 1.} For any $-1 \le t_1 \le t_2 \le \ldots \le t_n \le 1$
the following inequalities hold
\begin{align*}
\sum_{k = 1}^n \mathfrak a(t_k) & \ge \mathfrak a( 1 - \sum_{k = 1}^n (-1)^k t_k), & \text{ for even $n$}, & \\
\sum_{k = 1}^n \mathfrak a(t_k) & \ge \mathfrak a(- \sum_{k = 1}^n (-1)^k t_k), & \text{ for odd $n$}. &
\end{align*}

{\bf 2.}
Assume that in addition the function $\mathfrak a$ is even.
Then the following inequalities also hold:
\begin{align*}
\sum_{k = 1}^n \mathfrak a(t_k) & \ge \mathfrak a(-1 + \sum_{k = 1}^n (-1)^k t_k), & \text{ for even $n$}, & \\
\sum_{k = 1}^n \mathfrak a(t_k) & \ge \mathfrak a(\sum_{k = 1}^n (-1)^k t_k), & \text{ for odd $n$}. &
\end{align*}
\end{lm}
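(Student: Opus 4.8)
The plan is to prove all four inequalities by induction on $n$, using the basic inequality \eqref{almostConcave} as both the base case and the inductive step. The key observation is that the four alternating sums $1-\sum(-1)^k t_k$, $-\sum(-1)^k t_k$, $-1+\sum(-1)^k t_k$, and $\sum(-1)^k t_k$ are exactly the four quantities one obtains by iterating the two maps $r \mapsto 1-t+r$ (the right-hand side of \eqref{almostConcave}, which consumes a pair $s\le t$) and, when $\mathfrak a$ is even, $r\mapsto -r$ (reflection). So the induction should peel off the two largest points $t_{n-1}\le t_n$ at each step.

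\textbf{Part 1.} For even $n$ I would argue as follows. The base case $n=2$ is precisely \eqref{almostConcave} with $s=t_1$, $t=t_2$: $\mathfrak a(t_1)+\mathfrak a(t_2)\ge \mathfrak a(1-t_2+t_1)$. For the inductive step, given $t_1\le\dots\le t_{n}$ with $n$ even, apply \eqref{almostConcave} to the top pair to get $\mathfrak a(t_{n-1})+\mathfrak a(t_n)\ge \mathfrak a(1-t_n+t_{n-1})$. One checks that $\tau:=1-t_n+t_{n-1}\in[-1,1]$ and, since $\tau\ge 1-t_n+t_{n-2}\ge t_{n-2}$ (using $1-t_n\ge 1-1=0$ is not enough — rather use $\tau - t_{n-2}=1-t_n+(t_{n-1}-t_{n-2})\ge 1-t_n\ge 0$), the point $\tau$ dominates $t_1,\dots,t_{n-2}$. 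Thus $t_1\le\dots\le t_{n-2}\le\tau$ is an admissible increasing $(n-1)$-tuple; apply the odd-$n$ case of the induction hypothesis to it:
\[
\sum_{k=1}^{n-2}\mathfrak a(t_k)+\mathfrak a(\tau)\ \ge\ \mathfrak a\Bigl(-\sum_{k=1}^{n-2}(-1)^k t_k-(-1)^{n-1}\tau\Bigr).
\]
Since $n-1$ is odd, $-(-1)^{n-1}\tau=\tau=1-t_n+t_{n-1}$, and the argument on the right simplifies to $1-\sum_{k=1}^n(-1)^k t_k$ (because $-(-1)^n t_n=-t_n$ and $-(-1)^{n-1}t_{n-1}=t_{n-1}$ combine correctly). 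Adding $\mathfrak a(t_{n-1})+\mathfrak a(t_n)\ge\mathfrak a(\tau)$ to the displayed inequality gives the claim for even $n$. For odd $n$ the structure is the same: the base case $n=1$ is the trivial identity $\mathfrak a(t_1)=\mathfrak a(t_1)$, and the step peels off the top pair $t_{n-1}\le t_n$, applies \eqref{almostConcave}, and invokes the even-$n$ case on the reduced $(n-1)$-tuple $t_1\le\dots\le t_{n-2}\le\tau$. The two parities are thus proved simultaneously by a single interleaved induction.

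\textbf{Part 2.} Here $\mathfrak a$ is even, so $\mathfrak a(r)=\mathfrak a(-r)$. The two new inequalities are obtained from the two of Part 1 simply by replacing the argument of $\mathfrak a$ on the right with its negative: $-\bigl(1-\sum(-1)^kt_k\bigr)=-1+\sum(-1)^kt_k$ for even $n$, and $-\bigl(-\sum(-1)^kt_k\bigr)=\sum(-1)^kt_k$ for odd $n$. So Part 2 follows immediately from Part 1 by evenness; no new induction is needed.

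\textbf{Main obstacle.} The routine but error-prone part is the bookkeeping of signs: verifying that substituting $\tau=1-t_n+t_{n-1}$ into the alternating sum for the $(n-1)$-tuple really produces the alternating sum for the $n$-tuple, with the correct global constant ($1$ versus $-1$ versus $0$) depending on parity. One must also check at each step that the newly created point $\tau$ lies in $[-1,1]$ and is $\ge t_{n-2}$, so that the reduced tuple is still admissible; the inequality $\tau\ge t_{n-2}$ uses only $t_{n-1}\ge t_{n-2}$ and $t_n\le 1$, while $\tau\le 1$ uses $t_{n-1}\le t_n$ and $\tau\ge -1$ uses $t_n\le 1+t_{n-1}$ (automatic since $t_n\le 1$ and $t_{n-1}\ge -1$). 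None of this is deep, but it is the place where care is required; everything else is a direct application of \eqref{almostConcave} and evenness.
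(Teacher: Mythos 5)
Your proof is correct and follows essentially the same route as the paper: an interleaved induction on $n$ alternating between the even and odd cases, with exactly one application of (\ref{almostConcave}) per step, and with Part 2 deduced at once from Part 1 via evenness of $\mathfrak a$. The only (harmless) difference is one of ordering: you first collapse the top pair $t_{n-1}\le t_n$ into the synthetic point $\tau=1-t_n+t_{n-1}$ and then invoke the induction hypothesis on $t_1\le\dots\le t_{n-2}\le\tau$, whereas the paper applies the hypothesis to $n-1$ of the original points and uses (\ref{almostConcave}) last; your explicit checks $t_{n-2}\le\tau\le 1$ are precisely the admissibility verifications the paper leaves implicit.
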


\begin{proof}
{\bf 1.}
We prove the lemma by induction.
For $n = 1$ the assertion is trivial.
Now let $n$ be even. Then, by the induction hypothesis,
$$\sum_{k=1}^{n - 1} \mathfrak a(t_k) \ge \mathfrak a( -\sum_{k = 1}^{n - 1} (-1)^k t_k ).$$
Then
$$\sum_{k = 1}^{n - 1} \mathfrak a( t_k ) + \mathfrak a( t_n ) \ge \mathfrak a( -\sum_{k = 1}^{n - 1} (-1)^k t_k ) + \mathfrak a( t_n ) \ge
\mathfrak a( 1 - \sum_{k = 1}^{n} (-1)^k t_k ).$$
In the case of odd $n$ we have the following induction hypothesis:
$$\sum_{k=2}^n \mathfrak a(t_k) \ge \mathfrak a( 1 + \sum_{k = 2}^n (-1)^k t_k ).$$
Then
$$\mathfrak a( t_1 ) + \sum_{k = 2}^n \mathfrak a( t_k ) \ge \mathfrak a( t_1 ) + \mathfrak a( 1 + \sum_{k = 2}^{n} (-1)^k t_k ) \ge
\mathfrak a( -\sum_{k = 2}^{n} (-1)^k t_k + t_1 ) = \mathfrak a( -\sum_{k = 1}^{n} (-1)^k t_k ).$$

{\bf 2.} The proof of this part is trivial. 
\end{proof}

\begin{lm}
\label{periodicity}
{\bf 1.} Let $\mathfrak a$ satisfy $(\ref{almostConcave})$.
If there is $x_0 \in [-1, 1]$, such that $\mathfrak a(x_0) = 0$,
then either $\mathfrak a \Big|_{[x_0, 1]} \equiv 0$
or the set of zeros of $\mathfrak a$ is periodic on $[x_0, 1]$
and the period is a divisor of $1 - x_0$.

{\bf 2.} Let $\mathfrak a$ be even and satisfy $(\ref{almostConcave})$.
If there is $x_0 \in [-1, 1]$, such that $\mathfrak a(x_0) = 0$,
then either $\mathfrak a \equiv 0$
or the function $\mathfrak a$ is periodic on $[-1, 1]$
and the period is a divisor of $1 - x_0$.
\end{lm}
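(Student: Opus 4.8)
The plan is to reduce the lemma to an elementary fact about closed subsets of an interval that are stable under subtraction, and, in the even case, to the remark that the composition of two reflections is a translation.

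\textbf{Part 1.} One may assume $x_0<1$ (otherwise $[x_0,1]=\{1\}$ and $\mathfrak a(1)=\mathfrak a(x_0)=0$). Put $L:=1-x_0\in(0,2]$. First I would apply (\ref{almostConcave}) with $s=x_0$ to get $\mathfrak a(t)\ge\mathfrak a(1-t+x_0)$ for every $t\in[x_0,1]$; since $t\mapsto 1-t+x_0$ is an involution of $[x_0,1]$, interchanging the two arguments yields the reverse inequality, so $\mathfrak a(t)=\mathfrak a(1-t+x_0)$ on $[x_0,1]$, and in particular $\mathfrak a(1)=\mathfrak a(x_0)=0$. (Equivalently: $\mathfrak a(1-\cdot)$ is subadditive on $[0,2]$ and vanishes at $0$.) Next, set $Z:=\{t\in[x_0,1]:\mathfrak a(t)=0\}$ and $W:=\{1-t:t\in Z\}\subseteq[0,L]$, a closed set containing $0$ and $L$. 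For $\tau_1\ge\tau_2$ in $W$, condition (\ref{almostConcave}) with $s=1-\tau_1$, $t=1-\tau_2$ gives $0=\mathfrak a(1-\tau_1)+\mathfrak a(1-\tau_2)\ge\mathfrak a\big(1-(\tau_1-\tau_2)\big)$, hence $\tau_1-\tau_2\in W$. Thus $W$ is stable under subtraction, and since $L\in W$ also under $\tau\mapsto L-\tau$, hence under addition within $[0,L]$.

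Now the dichotomy. If $0$ is a limit point of $W$, pick $\tau_n\in W\cap(0,L]$ with $\tau_n\to 0$; the progressions $\{L-k\tau_n:0\le k\le\lfloor L/\tau_n\rfloor\}\subseteq W$ are $\tau_n$-dense in $[0,L]$, so closedness of $W$ forces $W=[0,L]$, i.e.\ $\mathfrak a\equiv 0$ on $[x_0,1]$ — the first alternative. If $0$ is isolated in $W$, put $p:=\min\big(W\cap(0,L]\big)>0$; additive stability gives $p\mathbb Z\cap[0,L]\subseteq W$, and were some $\tau\in W$ not a multiple of $p$, subtracting from $\tau$ the largest multiple of $p$ not exceeding it would produce a point of $W\cap(0,p)$, contradicting minimality. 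Hence $W=p\mathbb Z\cap[0,L]$; as $L\in W$ we get $L=Np$ for some $N\in\Nat$, so $p=(1-x_0)/N$ divides $1-x_0$, and unwinding, $Z=\{x_0+jp:0\le j\le N\}$ is periodic on $[x_0,1]$ with period $p$.

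\textbf{Part 2.} Let $\mathfrak a$ be even. By Part 1 at $x_0$, $\mathfrak a(1)=0$, hence $\mathfrak a(-1)=0$; applying Part 1 at the point $-1$ (where now $L=2$) we get either $\mathfrak a\equiv 0$ on $[-1,1]$, or the zero set of $\mathfrak a$ on $[-1,1]$ is $\{-1+kq:0\le k\le m\}$ with $q=2/m$, $m\in\Nat$. In the latter case $x_0=-1+k_0q$ for some $k_0$, so $1-x_0=(m-k_0)q$ and $q\mid(1-x_0)$; it remains to show $\mathfrak a$ itself is $q$-periodic. Since $-1+q$ is a zero, the symmetry from Part 1 applied at $-1+q$ reads $\mathfrak a(t)=\mathfrak a(q-t)$ for $t\in[-1+q,1]$, so, using evenness, for every $x\in[-1,1-q]$ (whence $-x\in[-1+q,1]$)
$$\mathfrak a(x)=\mathfrak a(-x)=\mathfrak a\big(q-(-x)\big)=\mathfrak a(x+q),$$
which is the asserted periodicity on $[-1,1]$, with $q\mid(1-x_0)$.

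The one structural step is the middle one: recognizing that the reflected zero set $W$ is stable under subtraction and — thanks to $L\in W$ — under addition, which by the classification of closed additive subsets of $\Real$ forces $W$ to be all of $[0,L]$ or an arithmetic progression whose step divides $L$. Everything else, including the even case, is bookkeeping — the even case amounting to the fact that the reflection in $0$ composed with the reflection in $q/2$ is translation by $q$.
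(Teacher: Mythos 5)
Your proof is correct and takes essentially the same route as the paper: the closure properties of the zero set extracted from (\ref{almostConcave}) (symmetry about the midpoint of $[x_0,1]$, stability of the reflected set $W$ under differences and, via $L\in W$, under sums), followed by the dichotomy giving either all of $[x_0,1]$ or an arithmetic progression whose step divides $1-x_0$, and in Part 2 the same two applications of (\ref{almostConcave}) at a zero combined with evenness, merely phrased as a composition of two reflections. Your write-up in fact fills in the final classification step that the paper only asserts, and upgrades the zero-set symmetry to the identity $\mathfrak a(t)=\mathfrak a(1-t+x_0)$; the only loose point is the unused parenthetical calling $\mathfrak a(1-\cdot)$ subadditive, since (\ref{almostConcave}) yields $g(\sigma)+g(\tau)\ge g(\sigma-\tau)$ for $g=\mathfrak a(1-\cdot)$, which coincides with subadditivity only when $\mathfrak a$ is even, but nothing in your argument depends on that remark.
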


\begin{proof}
{\bf 1.}
Note that if $\mathfrak a(s) = \mathfrak a(t) = 0$ for some $s \le t$
then the inequality (\ref{almostConcave}) implies
$$0 = \mathfrak a(s) + \mathfrak a(t) \ge \mathfrak a( 1 - (t - s) ) \ge 0$$
i.e. $\mathfrak a(1 - (t - s)) = 0$.
Substituting $s = t = x_0$, we obtain $\mathfrak a(1) = 0$.

Similarly, if $s \le 1 - t$ and $\mathfrak a(s) = \mathfrak a(1 - t) = 0$, then $\mathfrak a(s + t) = 0$.

Thus, the set of roots of $\mathfrak a$ is symmetric on the segment $[x_0, 1]$ and
whenever $s$ and $s + \Delta$ ($\Delta \ge 0$) are roots of $\mathfrak a$,
values $s + k\Delta$ are roots of $\mathfrak a$ too provided $s + k\Delta \le 1$.
This implies the set of roots of $\mathfrak a$ is periodic on $[x_0, 1]$
or coincides with it.

{\bf 2.} The periodicity of zeros of the function $\mathfrak a$ follows from its evenness and from the first assertion of the lemma.
Denote the distance between consecutive zeros by $\Delta$.

Then for $-1 \le x \le 1 - \Delta$ the following holds
$$\mathfrak a(x) = \mathfrak a(x) + \mathfrak a(1 - \Delta) \ge \mathfrak a(x + \Delta).$$

On the other hand, $-1 \le -(x + \Delta) \le 1 - \Delta$, and
$$\mathfrak a(x + \Delta) = \mathfrak a(-(x + \Delta)) + \mathfrak a(1 - \Delta) \ge \mathfrak a(-x) = \mathfrak a(x).$$

Thus, $\mathfrak a(x) = \mathfrak a(x + \Delta)$.
\end{proof}

\begin{lm}
\label{maxSumConcave}
Suppose that $\mathfrak a_1$ and $\mathfrak a_2$ satisfy $(\ref{almostConcave})$.
Then the functions $\max (\mathfrak a_1(x), \mathfrak a_2(x))$ and $\mathfrak a_1(x) + \mathfrak a_2(x)$ also satisfy $(\ref{almostConcave})$.
\end{lm}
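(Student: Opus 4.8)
The plan is to verify the defining inequality (\ref{almostConcave}) directly for each of the two functions, taking advantage of the fact that this inequality carries a \emph{single} term on its right-hand side and a \emph{sum} on its left-hand side. Throughout, fix an arbitrary triple $-1 \le s \le t \le 1$ and note that the point $\xi := 1 - t + s$ again lies in $[-1,1]$ (since $s \le t$ gives $\xi \le 1$, and $s \ge -1$, $t \le 1$ give $\xi \ge -1$), so all the evaluations below make sense.

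For the sum $\mathfrak a_1 + \mathfrak a_2$ I would simply add the inequality (\ref{almostConcave}), written for $\mathfrak a_1$ and for $\mathfrak a_2$ at the \emph{same} pair $s \le t$. The left-hand sides add up to $(\mathfrak a_1 + \mathfrak a_2)(s) + (\mathfrak a_1 + \mathfrak a_2)(t)$ and the right-hand sides to $(\mathfrak a_1 + \mathfrak a_2)(\xi)$, which is exactly (\ref{almostConcave}) for $\mathfrak a_1 + \mathfrak a_2$.

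For $m := \max(\mathfrak a_1, \mathfrak a_2)$ I would look only at the point $\xi$ and choose an index $i \in \{1, 2\}$ with $m(\xi) = \mathfrak a_i(\xi)$. Applying (\ref{almostConcave}) for this particular $\mathfrak a_i$ and then using the pointwise bounds $\mathfrak a_i(s) \le m(s)$, $\mathfrak a_i(t) \le m(t)$ yields $m(\xi) = \mathfrak a_i(\xi) \le \mathfrak a_i(s) + \mathfrak a_i(t) \le m(s) + m(t)$, as required.

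Since both verifications reduce to one-line computations, there is no genuine obstacle here; the only point worth flagging is why the argument works for the maximum at all. It is essential that the right-hand side of (\ref{almostConcave}) involves the weight at a single point $\xi$ only: this lets us commit to one of $\mathfrak a_1, \mathfrak a_2$ there and then dominate each of the two left-hand terms separately by $m$. Had the right-hand side been itself a sum of values of $\mathfrak a$, the selection step would fail, and the class need not be closed under $\max$.
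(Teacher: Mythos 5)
Your proof is correct and is essentially the argument in the paper: for the maximum you select the maximizer at the point $1-t+s$ and dominate each left-hand term by the maximum, which is the same computation as the paper's chain $\max(\mathfrak a_1(1-t+s),\mathfrak a_2(1-t+s))\le\max(\mathfrak a_1(s)+\mathfrak a_1(t),\mathfrak a_2(s)+\mathfrak a_2(t))\le\mathfrak a(s)+\mathfrak a(t)$, and for the sum you add the two inequalities, which the paper dismisses as obvious. Nothing is missing.
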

\begin{proof}
Set $\mathfrak a(x) = \max (\mathfrak a_1(x), \mathfrak a_2(x))$. Then
\begin{multline*}
\mathfrak a(1 - t + s) = \max(\mathfrak a_1( 1 - t + s), \mathfrak a_2(1 - t + s)) \le
\max(\mathfrak a_1(s) + \mathfrak a_1(t), \mathfrak a_2(s) + \mathfrak a_2(t)) \\
\le \max(\mathfrak a_1(s), \mathfrak a_2(s)) + \max(\mathfrak a_1(t), \mathfrak a_2(t)) =
\mathfrak a(s) + \mathfrak a(t).
\end{multline*}

The second part is obvious.
\end{proof}

\begin{lm}
\label{piecewiseLinearConcave}
Let the function $\mathfrak a$ satisfy $(\ref{almostConcave})$, $k \in \Nat$.
Then a piecewise linear function $\mathfrak a_k$,
interpolating $\mathfrak a$ using the nodes
$(-1 + \frac{2i}{k})$, $i = 0, 1, \dots, k$,
also satisfies $(\ref{almostConcave})$.
\end{lm}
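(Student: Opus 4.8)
The plan is to give a direct, elementary proof that uses only the defining interpolation property of $\mathfrak a_k$, the nonnegativity of $\mathfrak a$, and the hypothesis $(\ref{almostConcave})$; no earlier lemma and no property of $F$ will be needed. Fix the value $v$ (on which $(\ref{almostConcave})$ is a pointwise condition) and suppress it; write $h:=2/k$ and $x_i:=-1+ih$, $i=0,\dots,k$, for the interpolation nodes, so that $\mathfrak a_k(x_i)=\mathfrak a(x_i)$ and $\mathfrak a_k$ is linear on each $[x_i,x_{i+1}]$. The one structural fact that makes everything work is a closure property of the uniform grid under the operation in $(\ref{almostConcave})$: since $kh=2$, for any nodes $x_p\le x_q$ with $0\le p\le q\le k$ one has
\[
1-x_q+x_p=x_{k+p-q},\qquad 0\le k+p-q\le k,
\]
and, replacing $p$ by $p+1$ or $q$ by $q+1$, also $1-x_q+x_{p+1}=x_{k+p-q+1}$ and $1-x_{q+1}+x_p=x_{k+p-q-1}$. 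Thus the instances of $(\ref{almostConcave})$ at nodes are inequalities purely among the numbers $\mathfrak a(x_0),\dots,\mathfrak a(x_k)$.

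First I would dispose of the degenerate configurations. If $t=1$, then $1-t+s=s$ and the claim is just $\mathfrak a_k(1)=\mathfrak a(1)\ge 0$. If $s=t$, then $1-t+s=1$ and the claim is $2\,\mathfrak a_k(s)\ge\mathfrak a(1)$: writing $s$ as a convex combination of the two nodes bracketing it, linearity of $\mathfrak a_k$ together with the node instance $2\,\mathfrak a(x_i)\ge\mathfrak a(1)$ of $(\ref{almostConcave})$ (obtained by taking $s=t=x_i$ there) gives the inequality at once.

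So assume $-1\le s<t<1$, and choose $p,q\in\{0,\dots,k-1\}$ with $x_p\le s<x_{p+1}$, $x_q\le t<x_{q+1}$; write $s=x_p+\theta_1 h$, $t=x_q+\theta_2 h$ with $\theta_1,\theta_2\in[0,1)$. Then $p\le q$, and if $\theta_1\ge\theta_2$ necessarily $p<q$ (otherwise $s\ge t$). A short computation gives $1-t+s=x_{k+p-q}+(\theta_1-\theta_2)\,h$, so that $\mathfrak a_k(1-t+s)$ is an explicit convex combination of $\mathfrak a$ at two consecutive nodes around index $k+p-q$. The crux is the bookkeeping identity that the left-hand side
\[
\mathfrak a_k(s)+\mathfrak a_k(t)=(1-\theta_1)\mathfrak a(x_p)+\theta_1\mathfrak a(x_{p+1})+(1-\theta_2)\mathfrak a(x_q)+\theta_2\mathfrak a(x_{q+1})
\]
equals a convex combination of three quantities of the form $\mathfrak a(x_i)+\mathfrak a(x_j)$ with $i\le j$: when $\theta_1\ge\theta_2$, the pairs $(x_p,x_q),(x_{p+1},x_q),(x_{p+1},x_{q+1})$ with weights $1-\theta_1,\ \theta_1-\theta_2,\ \theta_2$ (the second pair is admissible precisely because $p<q$ in this case); when $\theta_1<\theta_2$, the pairs $(x_p,x_q),(x_p,x_{q+1}),(x_{p+1},x_{q+1})$ with weights $1-\theta_2,\ \theta_2-\theta_1,\ \theta_1$. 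Estimating each of the three sums from below by the corresponding node instance of $(\ref{almostConcave})$ and substituting the grid-closure identities above, the three lower bounds reassemble into exactly the convex combination that equals $\mathfrak a_k(1-t+s)$; this yields $\mathfrak a_k(s)+\mathfrak a_k(t)\ge\mathfrak a_k(1-t+s)$.

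The main obstacle is purely organizational: in each of the two cases one must check that the weights are nonnegative and sum to $1$, that all node indices invoked — chiefly $k+p-q$ and $k+p-q\pm1$ — stay in $\{0,\dots,k\}$ (this is where $0\le p\le q\le k-1$ enters), and that every pair fed to $(\ref{almostConcave})$ is in nondecreasing order, which is exactly the point of the remark that $\theta_1\ge\theta_2$ forces $p<q$. Once that is arranged, the reassembly is a one-line telescoping of node values, so I would write the two cases out compactly rather than belabour them.
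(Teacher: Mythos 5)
Your proof is correct, and it takes a genuinely different route from the paper's. Both arguments rest on the same two observations — that at grid points $x_i=-1+\tfrac{2i}{k}$ the interpolant agrees with $\mathfrak a$, and that the grid is closed under the map $(x_p,x_q)\mapsto 1-x_q+x_p=x_{k+p-q}$ — but the paper organizes the reduction to node instances as three successive one-variable linearity steps: it introduces the auxiliary functions $h_1(t)=\mathfrak a_k(1-t+s)-\mathfrak a_k(t)-\mathfrak a_k(s)$ (with $s$ a node), $h_2(y)=\mathfrak a_k(1-t+s)-\mathfrak a_k(s+y)-\mathfrak a_k(t+y)$ (with $1-t+s$ a node), and finally $h_3(s)$ for arbitrary $t$, each linear between breakpoints at which the sign is already known, so that nonpositivity propagates from endpoints. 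You instead collapse this into a single explicit identity: writing $s=x_p+\theta_1h$, $t=x_q+\theta_2h$, you exhibit $\mathfrak a_k(s)+\mathfrak a_k(t)$ as a convex combination of three node-pair sums (split into the cases $\theta_1\ge\theta_2$ and $\theta_1<\theta_2$), bound each pair by the corresponding node instance of (\ref{almostConcave}), and check that the resulting weights reassemble exactly into the two-node interpolation of $\mathfrak a_k(1-t+s)$ around index $k+p-q$; your index and ordering checks (in particular that $\theta_1\ge\theta_2$ with $s<t$ forces $p<q$, so the pair $(x_{p+1},x_q)$ is admissible) are the ones that matter, and they are right. The trade-off: the paper's staged argument avoids all weight and index bookkeeping and needs no case split, so it is shorter to write and harder to get wrong; your version is a one-shot, fully explicit verification that makes visible precisely which node instances of (\ref{almostConcave}) are used and with what weights, at the cost of the combinatorial care you already flag. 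Either proof establishes the lemma.
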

\begin{proof}
{\bf 1.}
Let $s = -1 + \frac{2i}{k}$, $t = -1 + \frac{2j}{k}$.
Then the inequality $(\ref{almostConcave})$ holds for $\mathfrak a_k$, because it does for $\mathfrak a$,
and their values at these points coincide.

{\bf 2.}
Now let $s = -1 + \frac{2i}{k}$ and $t \in [-1 + \frac{2j}{k}, -1 + \frac{2(j + 1)}{k}]$.

Consider the linear function $h_1(t) = \mathfrak a_k( 1 - t + s ) - \mathfrak a_k(t) - \mathfrak a_k(s)$.
It follows from part 1 that $h_1(-1 + \frac{2j}{k}) \le 0$ and $h_1(-1 + \frac{2(j + 1)}{k}) \le 0$.
Since $h_1$ is linear, $h_1(t) \le 0$.
Thus, the inequality holds for every $s = -1 + \frac{2i}{k}$ and $t \in [-1, 1]$.

{\bf 3.}
Let $s$ and $t$ satisfy $1 - t + s = \frac{2j}{k}$.

Consider the function $h_2(y) = \mathfrak a_k(\frac{2j}{k}) - \mathfrak a_k(s + y) - \mathfrak a_k(t + y)$.
If we choose $y_0$ such that $s + y_0$ is one of the nodes then $t + y_0$ is also a node.
Therefore, $h_2(y_0) = \mathfrak a(\frac{2j}{k}) - \mathfrak a(s + y_0) - \mathfrak a(t + y_0) \le 0$.
Since $h_2$ is linear between such $y_0$'s, we obtain $h_2(y) \le 0$ for all admissible $y$.

{\bf 4.}
Finally, consider $h_3(s) = \mathfrak a_k( 1 - t + s ) - \mathfrak a_k(t) - \mathfrak a_k(s)$ for arbitrary given $t \in [-1, 1]$.
Note that parts 2 and 3 imply $h_3(s) \le 0$ for any $s$
such that either $s$ or $1 - t + s$ is a node.
Since $h_3$ is linear between these points, $h_3(s) \le 0$ for all admissible $s$,
and the statement follows.
\end{proof}

\section{The result for piecewise linear functions}

\rm
In this section we prove the inequality (\ref{toprove}) for piecewise linear functions.
Without loss of generality, we assume that $F(\cdot, 0) \equiv 0$.

\begin{thm}
\label{linth}
Let the function $\mathfrak a$ be even and satisfy the condition $(\ref{almostConcave})$.
If $u$ is a nonnegative piecewise linear function then $I(\mathfrak a, u) \ge I(\mathfrak a, u^*)$.
\end{thm}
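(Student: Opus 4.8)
The plan is to reduce the claimed inequality, via the layer cake representation and a change of variables, to a one–parameter family of elementary inequalities, one for each level $t$, and then to settle each of them using Jensen's inequality together with Lemma \ref{weightSum}.

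Since $F(\cdot,0)\equiv0$, the pieces of $u$ on which $u'=0$ contribute nothing to $I(\mathfrak a,u)$, so only the finitely many maximal intervals on which $u$ is affine with nonzero slope matter. On such an interval, with $u(x)=mx+b$, the substitution $t=u(x)$ rewrites the corresponding part of the integral as $\int F\big(t,\mathfrak a(x(t),t)\,|m|\big)\,\frac{dt}{|m|}$, where $x(t)$ is the affine inverse. Collecting these contributions and regrouping by the value $t$, one obtains for a.e. $t$ (namely all $t$ outside the finite set of values of $u$ at its breakpoints and at $\pm1$)
\[
I(\mathfrak a,u)=\int_0^\infty\Big(\sum_{i=1}^{n(t)}\frac{F\big(t,\mathfrak a(x_i(t),t)\,p_i(t)\big)}{p_i(t)}\Big)\,dt,
\]
where $x_1(t)<\dots<x_{n(t)}(t)$ are the solutions of $u(x)=t$ and $p_i(t):=|u'(x_i(t))|>0$ the corresponding slopes. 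The same computation applied to the nondecreasing (hence again piecewise linear) function $u^*$ gives $I(\mathfrak a,u^*)=\int_0^\infty\frac{F\big(t,\mathfrak a(x^*(t),t)\,p^*(t)\big)}{p^*(t)}\,dt$, where the unique solution of $u^*=t$ is $x^*(t)=1-|\mathcal{A}_t|$ and, because $|\{u^*>t\}|=|\mathcal{A}_t|$ while $\frac{d}{dt}|\mathcal{A}_t|=-\sum_i p_i(t)^{-1}$, the slope obeys $p^*(t)^{-1}=\sum_i p_i(t)^{-1}$.

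Thus it suffices to prove, for a.e. $t$, the pointwise inequality
\[
\sum_{i=1}^{n}\frac{F(t,a_ip_i)}{p_i}\ \ge\ \frac{F(t,a^*p^*)}{p^*},\qquad a_i:=\mathfrak a(x_i(t),t),\quad a^*:=\mathfrak a(x^*(t),t).
\]
Setting $\lambda_i:=p^*/p_i$ we have $\lambda_i>0$ and $\sum_i\lambda_i=1$, and the left-hand side equals $\frac1{p^*}\sum_i\lambda_i F\big(t,\frac{a_ip^*}{\lambda_i}\big)$. By convexity of $F(t,\cdot)$ this is at least $\frac1{p^*}F\big(t,p^*\sum_i a_i\big)$, and since $F(t,\cdot)$ is increasing it remains only to check the "weight" inequality $\sum_{i=1}^n \mathfrak a(x_i(t),t)\ge\mathfrak a(1-|\mathcal{A}_t|,t)$. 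To obtain this I apply Lemma \ref{weightSum} to the function $x\mapsto\mathfrak a(x,t)$, which by hypothesis is even and satisfies (\ref{almostConcave}). The set $\mathcal{A}_t=\{u>t\}$ is a finite union of open intervals whose boundary points lying in $(-1,1)$ are exactly $x_1(t),\dots,x_n(t)$; a direct computation then expresses $1-|\mathcal{A}_t|$, in each of the four configurations determined by whether $u(-1)>t$ and whether $u(1)>t$, as precisely the alternating sum of the $x_k(t)$ occurring on the right-hand side of the matching inequality of Lemma \ref{weightSum} ($n$ being even in two of the configurations and odd in the other two, and the evenness of $\mathfrak a$ absorbing an inessential sign). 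This yields $\sum_i\mathfrak a(x_i(t),t)\ge\mathfrak a(1-|\mathcal{A}_t|,t)$, and the theorem follows.

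I expect the main obstacle to be the rigorous bookkeeping in the passage to the $t$-integral: justifying the representation above for a.e. $t$ (handling the finitely many critical values where $n(t)$ jumps, the flat pieces of $u$ and of $u^*$ — including the degenerate situation $t<\min u$, where both integrands simply vanish — and the identity $\frac{d}{dt}|\mathcal{A}_t|=-\sum_i p_i(t)^{-1}$), together with the correct matching of each boundary configuration to the appropriate case of Lemma \ref{weightSum}. Once the representation and this case analysis are in place, the remaining ingredients — Jensen's inequality and the monotonicity of $F$ in its second argument — are immediate.
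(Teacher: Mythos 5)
Your argument is correct and is essentially the same as the paper's proof: both change variables to the level parameter, use that flat pieces contribute nothing (after normalizing $F(\cdot,0)\equiv 0$), identify $(u^*)^{-1}$ as an alternating sum of the preimages $x_i(t)$ (the four boundary/parity configurations matching the four cases of Lemma \ref{weightSum}), and conclude via Jensen's inequality plus monotonicity of $F$ in its second argument. The relation $1/p^*=\sum_i 1/p_i$ is just the paper's $y^*{}'(v)=\sum_k\bigabs{y_k^j{}'(v)}$ in different notation, so no further comparison is needed.
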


\begin{proof}
Let $-1 = x_1 < x_2 < \dots < x_K = 1$ be the nodes of $u$.
Consider the set $U$ equal to the range of $u$ with images of endpoints of linear pieces excluded:
$U := u( [-1, 1] ) \setminus \{ u(x_1), \dots, u(x_K) \}$.
It's obvious that the set $U$ is the union of a finite number of intervals $U = \cup_{j = 1}^N G_j$.

We denote by $m_j$ the number of preimages for $u_0 \in G_j$,
i.e. the number of solutions of the equation $u(y) = u_0$
(obviously, $m_j$ does not depend on $u_0 \in G_j$).
It is easy to see that the preimages are linear functions of $u_0$:
$y = y_k^j(u_0)$, $k = 1, \dots, m_j$,
and $y_k^j{}'(u(y)) = \frac{1}{u'(y)}$.
We assume that $y_1^j(u_0) < y_2^j(u_0) < \dots < y_{m_j}^j(u_0)$.

The solution of the equation $u^*(y^*)=u_0$ ($u_0 \in U$) can be expressed in terms of $y_k^j$:

\begin{center}
\begin{tabular}{l|l|l} 
$u(-1)<u_0$ & $m_j$ is even & $y^*=1-\sum\limits_{k=1}^{m_j} (-1)^k y_k^j$ \rule[-17pt]{0pt}{40pt} \\
            & $m_j$ is odd  & $y^*=-\sum\limits_{k=1}^{m_j} (-1)^k y_k^j$ \rule[-17pt]{0pt}{40pt} \\ \hline
$u(-1)>u_0$ & $m_j$ is even & $y^*=-1+\sum\limits_{k=1}^{m_j} (-1)^k y_k^j$ \rule[-17pt]{0pt}{40pt} \\
            & $m_j$ is odd  & $y^*=\sum\limits_{k=1}^{m_j} (-1)^k y_k^j$ \rule[-17pt]{0pt}{40pt} \\ 
\end{tabular}
\end{center}

Let $y^*(v) = (u^*)^{-1}(v)$.
Then $y^*{}'(v) = \sum_{k=1}^{m_j} \abs{y_k^j{}'(v)}$ for $v \in G_j$, as the signs in the expression for
$y^*$ and signs of $y_k^j{}'$ alternate, and $y^*{}'(v)\ge 0$.

The sets of zeros of $u'(x)$ and $u^*{}'(x)$ can have nonzero measure.
However, they do not contribute to the integral, since $F\big(u(x), 0\big) = 0$.

Consider the remaining parts of the integrals :
\begin{multline*}
I(\mathfrak a, u) = \sum_{j=1}^N \,\int\limits_{u^{-1}(G_j)} F\big(u(x), \mathfrak a(x, u(x)) \abs{u'(x)}\big) \, dx
\\ = \sum_{j=1}^N \,\int\limits_{G_j} \sum_{k=1}^{m_j} F\Big(v, \frac{\mathfrak a(y_k^j(v), v)}{\bigabs{y_k^j{}'(v)}}\Big) \bigabs{y_k^j{}'(v)} \, dv,
\end{multline*}
\begin{multline*}
I(\mathfrak a, u^*) = \sum_{j=1}^N \,\int\limits_{(u^*)^{-1}(G_j)} F\big(u^*(x), \mathfrak a(x, u(x)) \bigabs{u^*{}'(x)}\big) \, dx
\\ = \sum_{j=1}^N \,\int\limits_{G_j} F\Big(v, \frac{\mathfrak a(y^*(v), v)}{\sum_{k=1}^{m_j} \bigabs{y_k^j{}'(v)}}\Big)
\sum_{k=1}^{m_j} \bigabs{ y_k^j{}'(v) } \, dv.
\end{multline*}

We fix $j$ and $v$ in the right parts and prove the inequality for integrands.
We denote $b_k := |y_k^j{}'(v)|$, $y_k := y_k^j(v)$, $y^* := y^*(v)$, $m := m_j$.
Then the assertion takes the form:
$$T:=\sum_{k=1}^m b_k F\Big( v, \frac{ \mathfrak a(y_k, v) }{b_k} \Big)
\ge F\Big( v, \frac{ \mathfrak a(y^*, v) }{ \sum_{k=1}^m b_k  } \Big) \sum_{k=1}^m b_k.$$
By Jensen's inequality for the function $F(v, \cdot)$, we obtain
$$T \ge F\Big( v, \frac{ \sum_{k=1}^m \mathfrak a(y_k, v) }{ \sum_{k=1}^m b_k } \Big) \sum_{k=1}^m b_k.$$
Then it is sufficient to prove $\sum_{k=1}^m \mathfrak a(y_k, v) \ge \mathfrak a(y^*, v)$, which is true due to Lemma \ref{weightSum}.
\end{proof}

\begin{rem}
\label{landesLinear}
In the paper $\cite{Lan}$ the inequality $(\ref{toprove})$ is proved under the additional assumption $u(-1) = 0$
for the weight functions $\mathfrak a$, decreasing in $x$.
It is easy to see that under this assumption, the proof of Theorem $\ref{linth}$ works for weights satisfying
$(\ref{almostConcave})$ without the evenness assumption,
since in this case $u(-1) < u_0$, and we need only two of the four inequalities,
given by the first part of Lemma $\ref{weightSum}$.
It is also obvious that the assumption $(\ref{almostConcave})$ is weaker than the assumption of $\mathfrak a$ decreasing in $x$.
\end{rem}

\section{Extension of class of functions for which inequality (\ref{toprove}) holds}
The next statement is rather standard. However, we give a full proof for the reader's convenience.

\begin{lm}
Let the function $\mathfrak a$ be continuous. Then the functional $I(\mathfrak a, u)$ is weakly lower semicontinuous in $\W(-1, 1)$.
\label{lowersemi}
\end{lm}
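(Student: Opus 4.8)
The plan is to establish sequential weak lower semicontinuity: if $u_k\rightharpoonup u$ in $\W(-1,1)$ then $\liminf_k I(\mathfrak a,u_k)\ge I(\mathfrak a,u)$. One may assume $\liminf_k I(\mathfrak a,u_k)$ is finite and, passing to a subsequence, that $I(\mathfrak a,u_k)\to L:=\liminf_k I(\mathfrak a,u_k)$. I would first record the standard facts attached to weak convergence in $\W(-1,1)$: the sequence $\{u_k\}$ is bounded in $\W$, hence in $C[-1,1]$, so $\|u_k\|_{C}\le R$ for some $R$; the derivatives converge, $u_k'\rightharpoonup u'$ weakly in $L^1$, so $\{u_k'\}$ is uniformly integrable by the Dunford--Pettis theorem; and, since point evaluations belong to $(\W)^*$ and uniform integrability of $\{u_k'\}$ yields equicontinuity of $\{u_k\}$, Arzel\`a--Ascoli gives $u_k\to u$ uniformly on $[-1,1]$. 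The uniform convergence of $u_k$ is the feature that makes the proof work.

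The whole argument reduces to the classical lower semicontinuity for integrands convex in the derivative and independent of the unknown: \emph{if $g:[-1,1]\times\Real\to\Real_+$ is measurable in $x$ and continuous and convex in the second argument, then $v_k\rightharpoonup v$ in $L^1$ implies $\liminf_k\int g(x,v_k)\,dx\ge\int g(x,v)\,dx$.} This I would prove by Mazur's lemma — along a subsequence realizing the $\liminf$, finite convex combinations $w_j$ of a tail converge to $v$ in $L^1$, hence a.e. along a further subsequence — together with convexity (each $g(x,w_j)$ does not exceed the corresponding convex combination of the $g(x,v_k)$, so $\int g(x,w_j)$ stays $\le L$ in the limit) and Fatou's lemma, which is available because $g\ge0$ and forces $\int g(x,v)\le L$.

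To fit $I(\mathfrak a,u_k)$ into this scheme I would, for fixed $\rho>0$, replace the weight and the integrand by their minorants over a $\rho$-neighbourhood in the $u$-variable: set $\mathfrak a_\rho(x,y):=\min\{\mathfrak a(x,w):\,w\ge0,\ |w-y|\le\rho\}$ and $\Phi_\rho(y,t):=\min\{F(w,t):\,w\ge0,\ |w-y|\le\rho\}$, and let $\widehat\Phi_\rho(y,\cdot)$ be the greatest convex minorant of $\Phi_\rho(y,\cdot)$ on $\Real_+$. Then $\mathfrak a_\rho$ is continuous, $\Phi_\rho$ is continuous and nondecreasing in $t$, and $\widehat\Phi_\rho(y,\cdot)$ is convex, nondecreasing, continuous, with $\widehat\Phi_\rho$ jointly Borel. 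Once $\|u_k-u\|_{C}\le\rho$, the pointwise bounds $\mathfrak a(x,u_k(x))\ge\mathfrak a_\rho(x,u(x))$ and $F(u_k(x),t)\ge\Phi_\rho(u(x),t)\ge\widehat\Phi_\rho(u(x),t)$, together with monotonicity of $\widehat\Phi_\rho(u(x),\cdot)$, give
$$F\big(u_k(x),\mathfrak a(x,u_k(x))|u_k'(x)|\big)\ \ge\ \widehat\Phi_\rho\big(u(x),\mathfrak a_\rho(x,u(x))|u_k'(x)|\big)\ =:\ g_\rho\big(x,u_k'(x)\big),$$
where $g_\rho\ge0$ is Carath\'eodory and convex in the last variable. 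The lemma of the previous paragraph then yields $L\ge\liminf_k\int g_\rho(x,u_k')\,dx\ge\int g_\rho(x,u'(x))\,dx$ for every $\rho>0$.

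Finally I would let $\rho\downarrow0$ along a sequence $\rho_n$. Since $\mathfrak a_{\rho_n}(x,u(x))\to\mathfrak a(x,u(x))$ and $\le\mathfrak a(x,u(x))$, we have $s_n:=\mathfrak a_{\rho_n}(x,u(x))|u'(x)|\to S:=\mathfrak a(x,u(x))|u'(x)|$ with $s_n\le S$, and the crux is to check $\liminf_n\widehat\Phi_{\rho_n}(u(x),s_n)\ge F(u(x),S)$ for a.e. $x$. For this I would use the one-dimensional convex-envelope formula: choose a near-optimal representation $\widehat\Phi_{\rho_n}(w,s_n)+\rho_n\ge\lambda_n F(w_1^n,t_1^n)+(1-\lambda_n)F(w_2^n,t_2^n)$ with $w=u(x)$, $0\le t_1^n\le s_n\le t_2^n$, $\lambda_n t_1^n+(1-\lambda_n)t_2^n=s_n$, and $|w_i^n-w|\le\rho_n$; then $t_1^n\le S$ stays bounded, convexity of $F(w_2^n,\cdot)$ gives $\lambda_n F(w_2^n,t_1^n)+(1-\lambda_n)F(w_2^n,t_2^n)\ge F(w_2^n,s_n)$, and uniform continuity of $F$ on a fixed compact — needed only in the first variable and only at the bounded argument $t_1^n$ — lets one replace $w_1^n$ by $w_2^n$ up to $o(1)$, so $\widehat\Phi_{\rho_n}(w,s_n)\ge F(w_2^n,s_n)-o(1)\to F(w,S)$. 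A last application of Fatou's lemma to $\int g_{\rho_n}(x,u'(x))\,dx$ then gives $L\ge\int F\big(u,\mathfrak a(x,u)|u'|\big)\,dx=I(\mathfrak a,u)$. I expect this last step — reconciling the unbounded derivative $u'$ with the fact that $F$ need not be monotone in its first argument, which blocks any naive control of the modulus of continuity of $F(\cdot,p)$ uniformly in $p$ — to be the main obstacle; freezing $u_k$ to a $\rho$-neighbourhood of $u$ and passing through the convex envelope is precisely what circumvents it.
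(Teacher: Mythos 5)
Your proposal is correct, and while its core coincides with the paper's --- both arguments ultimately reduce to lower semicontinuity of an integrand that is convex in the derivative, proved via Mazur's lemma, convexity and a Fatou/dominated-convergence passage --- the way you freeze the dependence on $u_k$ is genuinely different. The paper localizes: after Egorov it works on sets $L_{m,\eps}$ where $u_m$ is uniformly close to $u$, $u$ is bounded and $\abs{u_m'}\le R_0/\eps$, so that uniform continuity of $F$ and $\mathfrak a$ on a compact set allows replacing $F\big(u_m,\mathfrak a(x,u_m)\abs{u_m'}\big)$ by $F\big(u,\mathfrak a(x,u)\abs{u_m'}\big)$ up to $\eps$; a diagonal choice of the sets and a final limit $\eps\to0$ finish the proof. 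You instead use that weak convergence in $\W(-1,1)$ gives $u_k\rightrightarrows u$ on all of $[-1,1]$ (Dunford--Pettis plus equicontinuity), and replace the integrand by the $k$-independent minorant $\widehat\Phi_\rho\big(u(x),\mathfrak a_\rho(x,u(x))\abs{p}\big)$, convex in $p$, so that no truncation of the derivative and no exceptional sets are needed; the price is the recovery step as $\rho\downarrow0$, which you handle correctly via the two-point representation of the one-dimensional convex envelope, convexity and monotonicity of $F(w,\cdot)$, and uniform continuity of $F$ only at the bounded argument $t_1^n\le S$. Two small points should be written out: (i) the joint Borel measurability of $\widehat\Phi_\rho$ --- easiest by using instead the supremum of affine minorants with rational coefficients, a countable supremum of Borel functions which still satisfies all the inequalities you need, and which also settles the tacitly used identity between the greatest convex minorant and the two-point infimum; and (ii) the monotonicity of $\widehat\Phi_\rho(y,\cdot)$, which you can bypass by invoking monotonicity of $\Phi_\rho(y,\cdot)$ before passing to the envelope. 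In exchange, your route avoids the paper's $\eps_j$-diagonalization and needs only Fatou where the paper invokes the Lebesgue theorem.
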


\begin{proof}
Let $u_m \rightharpoondown u$ in $\W(-1, 1)$.
Let's denote $A = \varliminf I( \mathfrak a, u_m ) \ge 0$.
We are going to prove $I(\mathfrak a, u) \le A$.
In the case $A = \infty$ the assertion is trivial, so we can assume $A < \infty$.
Switching to a subsequence, we obtain $A = \lim I( \mathfrak a, u_m )$.

Weak convergence implies, that there exists
$R_0$ such that $\norm{ u_m }_{\W(-1, 1)} \le R_0$.
Moreover, switching to a subsequence, we can assume that $u_m \to u$ in $L_1(-1, 1)$
and $u_m(x) \to u(x)$ almost everywhere.
Then, by Egorov's theorem, for any $\eps$ there exists a set
$G_\eps^1$ such that $\abs{ G_\eps^1 } < \eps$ and $u_m \rightrightarrows u$ in $[-1, 1] \setminus G_\eps^1$.

Uniform convergence of $u_m$ implies there exists $K$ such that for each $m>K$
the inequality $\abs{u_m} \le \abs{u} + \eps$ holds in $[-1, 1] \setminus G_\eps^1$.
Let $G_\eps^2 = \{x \in [-1, 1] \setminus G_\eps^1 : \abs{u(x)} \ge \frac{R_0 + \eps}{\eps} \}$.
Then
$$R_0 \ge \int\limits_{-1}^1 \abs{ u(x) } \, dx \ge \int\limits_{G_\eps^2} \abs{ u(x) } \, dx \ge
\int\limits_{G_\eps^2} \frac{R_0 + \eps}{\eps} \, dx = \abs{G_\eps^2} \frac{R_0 + \eps}{\eps}$$
That is, $\abs{G_\eps^2} \le \eps \frac{R_0}{R_0 + \eps} < \eps$.
Thus, the functions $u_m$ converge uniformly and are uniformly bounded outside the set $G_\eps := G_\eps^1 \cup G_\eps^2$.

Continuity of $F$ and $\mathfrak a$ implies that for any $\eps$ and $R$, there exists
$N( \eps, R )$, such that if $x \in [-1, 1] \setminus G_\eps$, $\abs{ M } \le R$ and $m > N( \eps, R )$ then
$$| F\big( u_m( x ), \mathfrak a( x, u_m( x ) ) M \big) - F\big( u( x ), \mathfrak a( x, u( x ) ) M \big) | < \eps.$$

Let $E_{m,\eps} := \{ x \in [-1, 1]: \abs{ u_m'( x ) } \ge \frac{ R_0 }{ \eps } \}$.
Then
$$R_0 \ge \int\limits_{-1}^1 \abs{ u_m'( x ) } \, dx \ge \int\limits_{ E_{m,\eps} } \abs{ u_m'( x ) } \, dx \ge
\int\limits_{ E_{m,\eps} } \frac{ R_0 }{ \eps } \, dx = \frac{ R_0 }{ \eps } \abs{ E_{m,\eps} }.$$
Therefore $\abs{ E_{m,\eps} } \le \eps$.

Finally we set $L_{m,\eps} := [-1, 1] \setminus ( E_{m,\eps} \cup G_\eps )$.
Note, that $\abs{ L_{m,\eps} } \ge 2 - 3 \eps$.

We put $R := \frac{ R_0 }{ \eps }$, $N( \eps ) := N( \eps, \frac{ R_0 }{ \eps } )$.
For any $\eps > 0$, $x \in L_{m,\eps}$ and $m > N( \eps )$ we have
$$\Big | F\big( u_m( x ), \mathfrak a( x, u_m( x ) ) \abs{u_m'( x )} \big) - F\big( u( x ), \mathfrak a( x, u( x ) ) \abs{u_m'( x )} \big) \Big | < \eps,$$
thus
\begin{equation}
\label{frDer}
\int\limits_{L_{m,\eps}} \Big | F\big( u_m( x ), \mathfrak a( x, u_m( x ) ) \abs{u_m'( x )} \big) - F\big( u( x ), \mathfrak a( x, u( x ) ) \abs{u_m'( x )} \big) \Big | \, dx < 2 \eps.
\end{equation}

We put $\eps_j = \frac{ \eps }{ 2^j }$ ($j \ge 1$), $m_j = N( \eps_j ) + j \to \infty$ and $L_\eps = \bigcap L_{m_j,\eps_j}$.
Then $\sum \eps_j = \eps$ and therefore $\abs{ [-1, 1] \setminus L_\eps } < 3 \eps$.
Since (\ref{frDer}) implies
$$\int\limits_{L_\eps} \Big | F\big( u_{m_j}( x ), \mathfrak a( x, u_{m_j}( x ) ) |u_{m_j}'( x )| \big) - F\big( u( x ), \mathfrak a( x, u( x ) ) |u_{m_j}'( x )| \big) \Big | \, dx < 2 \eps_j,$$
we obtain
\begin{multline*}
A = \lim I (\mathfrak a, u_{m_j}) = \lim \int\limits_{-1}^1 F\big(u_{m_j}(x), \mathfrak a(x, u_{m_j}(x)) | u_{m_j }'(x) |\big) \, dx \\
\ge \varliminf \int\limits_{-1}^1 \chi_{L_\eps}(x) F\big(u (x), \mathfrak a(x, u(x)) | u_{m_j}'(x) |\big) \, dx
=: \varliminf J_\eps(u_{m_j}').
\end{multline*}

The functional
$$J_\eps( v ) = \int\limits_{-1}^1 \chi_{L_\eps}( x ) F\big( u( x ), \mathfrak a( x, u( x ) ) |v( x )| \big) \, dx$$
is convex.
Switching to a subsequence $u_k$ again, we can assume that
$\varliminf J_\eps( u_{m_j}' ) = \lim J_\eps( u_k' )$.
Since $u_k' \rightharpoondown u'$ in $L_1$, we can choose a sequence of convex combinations of $u_k'$,
which converges to $u'$ strongly (see \cite[Theorem 3.13]{Rudin}).
Namely, there are $\alpha_{k,l} \ge 0$ for
$k \in \Nat$, $l \le k$, such that $\sum_{l = 1}^k \alpha_{k,l} = 1$ for every $k$ and
$w_k := \sum_{l = 1}^k \alpha_{k,l} u_{l}' \to u'$ in $L_1$.
Also, without loss of generality we can assume that the minimal index $l$ of a nonzero coefficient $\alpha_{k,l}$
tends to infinity as $k$ tends to infinity.
Then
$$\lim J_\eps( u_k' ) = \lim \sum_{l = 1}^k \alpha_{k,l} J_\eps( u_{l}' ).$$

By the convexity of $J_\eps$, we have
$$\sum_{l = 1}^k \alpha_{k,l} J_\eps( u_{l}' ) \ge J_\eps( w_k ).$$

Finally, since $w_k \to u'$ in $L_1(-1, 1)$, we can assume, by switching to a subsequence, that $w_k(x) \to u'(x)$ almost everywhere.
Moreover, since $\abs{ u_j'( x ) } < \frac{ R_0 }{\eps}$ holds for $x \in L_\eps$, then $\abs{ w_k( x ) } < \frac{ R_0 }{\eps}$.
Hence,
$$F\big( u( x ), \mathfrak a( x, u( x ) ) |w_k( x )| \big) \le \max\limits_{(x, M)} F\big( u( x ), \mathfrak a( x, u( x ) ) M \big) < \infty,$$
where the maximum is taken over a compact set
$(x,M) \in [-1, 1] \times [-\frac{ R_0 }{\eps},\frac{ R_0 }{\eps}]$.
Therefore, by the Lebesgue theorem, $\lim J_\eps(w_k) = J_\eps(u')$.
Thus,
$$A \ge \lim J_\eps( u_k' ) = \lim \sum_{l = 1}^k \alpha_{k,l} J_\eps( u_{l}' ) \ge
\varliminf J_\eps( w_k ) = J_\eps( u' ).$$

Since $\eps > 0$ is arbitrary, $A \ge I(\mathfrak a, u)$ follows.
\end{proof}

\begin{lm}
\label{uplift}
Let $B \subset A \subset \W(-1,1)$.
Let the inequality (\ref{toprove}) hold for any $u \in B$.
Suppose that for each $u \in A$
there is a sequence $u_k \in B$ such that relation (\ref{convergence}) holds.
Then the inequality (\ref{toprove}) holds for any $u \in A$.
\end{lm}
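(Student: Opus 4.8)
The plan is to combine the two ingredients already available: Lemma~\ref{lowersemi} (weak lower semicontinuity of $I(\mathfrak a,\cdot)$ in $\W(-1,1)$) and the convergence~(\ref{convergence}) assumed in the hypotheses. Fix $u\in A$. By hypothesis there is a sequence $u_k\in B$ with $u_k\to u$ in $\W(-1,1)$ and $I(\mathfrak a,u_k)\to I(\mathfrak a,u)$. Since $(\ref{toprove})$ holds on $B$, we have $I(\mathfrak a,u_k^*)\le I(\mathfrak a,u_k)$ for every $k$. Passing to the limit on the right-hand side gives $\varlimsup_k I(\mathfrak a,u_k^*)\le\lim_k I(\mathfrak a,u_k)=I(\mathfrak a,u)$. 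It remains to pass to the limit on the left, and for that we need $I(\mathfrak a,u^*)\le\varliminf_k I(\mathfrak a,u_k^*)$, which will follow from Lemma~\ref{lowersemi} once we know $u_k^*\rightharpoondown u^*$ in $\W(-1,1)$.

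So the heart of the argument is the implication: $u_k\to u$ in $\W(-1,1)$ $\Longrightarrow$ $u_k^*\rightharpoondown u^*$ in $\W(-1,1)$. First I would record that the monotone rearrangement is an $L^1$-contraction on nonnegative functions: $\norm{u_k^*-u^*}_{L_1(-1,1)}\le\norm{u_k-u}_{L_1(-1,1)}$ (this is the standard nonexpansivity of rearrangement in $L^1$, which follows from the layer-cake formula together with $\abs{E\triangle E^*}\le$ ``distribution distance'', or directly from the Hardy--Littlewood/coarea identities), hence $u_k^*\to u^*$ in $L_1(-1,1)$. Next, $\norm{u_k}_{\W(-1,1)}$ is bounded (being convergent), and the rearrangement preserves the $L^1$ norm of the function while not increasing the $L^1$ norm of the derivative — the latter because, for a monotone function, $\int_{-1}^1\abs{(u_k^*)'}=\operatorname{osc} u_k^*=\operatorname{ess\,sup} u_k-\operatorname{ess\,inf}u_k\le\int_{-1}^1\abs{u_k'}$. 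Therefore $\set{u_k^*}$ is bounded in $\W(-1,1)$, so a subsequence converges weakly in $\W(-1,1)$ to some limit; since it already converges to $u^*$ in $L_1$, the weak limit must be $u^*$, and by the subsequence principle the whole sequence satisfies $u_k^*\rightharpoondown u^*$.

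With $u_k^*\rightharpoondown u^*$ in hand, Lemma~\ref{lowersemi} yields $I(\mathfrak a,u^*)\le\varliminf_k I(\mathfrak a,u_k^*)\le\varlimsup_k I(\mathfrak a,u_k^*)\le I(\mathfrak a,u)$, which is exactly $(\ref{toprove})$ for $u\in A$. I expect the only genuinely delicate point to be the claim that rearrangement does not increase the derivative norm and is $L^1$-nonexpansive in the $\W$ setting — one must make sure the relevant functions are (after choosing the monotone representative) absolutely continuous so that $\int\abs{(u^*)'}$ really equals the oscillation; for $u\in\W(-1,1)$ this is automatic, and the same layer-cake bookkeeping used earlier in the paper handles the nonexpansivity. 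Everything else is the standard "lower semicontinuity plus approximating sequence" closure argument.
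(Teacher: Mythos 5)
Your overall scheme coincides with the paper's: fix $u\in A$, take the approximating sequence $u_k\in B$, use $I(\mathfrak a,u_k^*)\le I(\mathfrak a,u_k)\to I(\mathfrak a,u)$, and conclude via Lemma~\ref{lowersemi} once $u_k^*\rightharpoondown u^*$ in $\W(-1,1)$ is established. The gap is precisely in how you establish that weak convergence. Your argument is: $u_k^*\to u^*$ in $L_1$ (fine, rearrangement is an $L_1$-contraction), $\norm{(u_k^*)'}_{L_1}\le\norm{u_k'}_{L_1}$ so $\{u_k^*\}$ is bounded in $\W(-1,1)$ (also fine), hence ``a subsequence converges weakly in $\W(-1,1)$''. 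This last step is false as stated: $\W(-1,1)$ is not reflexive, and a norm-bounded sequence in $L_1$ need not contain a weakly convergent subsequence --- weak sequential compactness in $L_1$ requires equi-integrability (Dunford--Pettis), not just boundedness. A bounded sequence of derivatives can concentrate, so that the $L_1$-limit of $u_k^*$ is only a $BV$ function a priori, and even when the limit happens to lie in $\W$ (as $u^*$ does here) one cannot conclude $(u_k^*)'\rightharpoondown (u^*)'$ in $L_1$ from boundedness alone. The same objection hits your ``subsequence principle'' step, which again needs weak compactness of every subsequence.

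The paper sidesteps this by quoting Theorem~1 of \cite{Br}, which asserts exactly the nontrivial implication: $u_k\to u$ strongly in $\W(-1,1)$ implies $\overline{u_k}\rightharpoondown\overline{u}$ weakly in $\W(-1,1)$; the statement for the monotone rearrangement then follows from the identity $u_k^*(x)=\overline{u_k}\bigl(\frac{x-1}{2}\bigr)$. If you want to keep your self-contained route, the missing ingredient is equi-integrability of $\{(u_k^*)'\}$: since $u_k'\to u'$ strongly in $L_1$, the family $\{u_k'\}$ is equi-integrable, so by de la Vall\'ee Poussin there is a convex superlinear $G$ with $\sup_k\int G(|u_k'|)<\infty$; the classical unweighted P\'olya--Szeg\"o inequality (\ref{toprove}) with $F(u,p)=G(p)$, $\mathfrak a\equiv 1$ then gives $\int G(|(u_k^*)'|)\le\int G(|u_k'|)$, so $\{(u_k^*)'\}$ is equi-integrable and Dunford--Pettis yields the weak $L_1$ compactness you need; identification of the limit with $(u^*)'$ via the $L_1$ convergence of $u_k^*$ then closes the argument. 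Without some such step (or the citation of \cite{Br}), the proof as written does not go through.
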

\begin{proof}
Let us pick some $u \in A$ and find an appropriating sequence $\{u_k\} \subset B$.
By hypothesis, $I(\mathfrak a, u_k^*) \le I(\mathfrak a, u_k) \to I(\mathfrak a, u)$.
By \cite[Theorem 1]{Br}
$$u_k \to u \text{ in } \W(-1, 1) \quad \Longrightarrow \quad \overline{u_k} \rightharpoondown \overline{u} \text{ in } \W(-1, 1).$$
Since $u_k^*( x ) = \overline{u_k}( \frac{x - 1}{2} )$ and $u^*( x ) = \overline{u}( \frac{x - 1}{2} )$,
we have $u_k^* \rightharpoondown u^*$ in $\W(-1, 1)$.
By Lemma \ref{lowersemi}, we obtain 
$$I(\mathfrak a, u^*) \le \liminf I(\mathfrak a, u_k^*) \le \lim I(\mathfrak a, u_k) = I(\mathfrak a, u).$$
\end{proof}

\begin{cor}
Let the weight $\mathfrak a$ be continuous, and let the inequality $(\ref{toprove})$ hold for non-negative piecewise linear functions $u$.
Then it holds for all non-negative Lipschitz functions.
\end{cor}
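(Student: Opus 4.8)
\noindent\emph{Proof idea.} The plan is to apply Lemma~\ref{uplift} with $B$ the set of non-negative continuous piecewise linear functions and $A$ the set of non-negative Lipschitz functions; note that $B\subset A\subset\W(-1,1)$ and that, by hypothesis, the inequality $(\ref{toprove})$ holds on $B$. It therefore suffices to show that every non-negative Lipschitz $u$ admits a sequence $u_k\in B$ with $u_k\to u$ in $\W(-1,1)$ and $I(\mathfrak a,u_k)\to I(\mathfrak a,u)$, i.e. the relation $(\ref{convergence})$.

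First I would take $u_k$ to be the piecewise linear interpolant of $u$ with nodes $-1+\frac{2i}{k}$, $i=0,\dots,k$. On each mesh interval $u_k$ is a convex combination of two non-negative values of $u$, so $u_k\ge 0$; and since $u$ is continuous, $u_k\to u$ uniformly on $[-1,1]$, hence in $L_1(-1,1)$. On the interior of a mesh interval $I$ one has $u_k'\equiv\frac1{\abs I}\int_I u'$, whence $\abs{u_k'}\le L$ a.e., where $L$ is the Lipschitz constant of $u$. Next, at every Lebesgue point $x$ of $u'$ that is not a node of any $u_k$ — i.e.\ for a.e.\ $x$ — the averages of $u'$ over the mesh intervals containing $x$, whose lengths tend to $0$, converge to $u'(x)$ by the Lebesgue differentiation theorem; thus $u_k'\to u'$ a.e. Combined with the uniform bound $\abs{u_k'}\le L$, the dominated convergence theorem gives $u_k'\to u'$ in $L_1(-1,1)$, and together with $u_k\to u$ in $L_1$ this yields $u_k\to u$ in $\W(-1,1)$.

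It then remains to pass to the limit in the functional. All $u_k$ take values in $[0,\norm u_{L_\infty([-1,1])}]$ and satisfy $\abs{u_k'}\le L$, so the arguments of $F$ stay in the compact set $[0,\norm u_{L_\infty}]\times[0,LA]$, where $A$ is the maximum of $\mathfrak a$ over $[-1,1]\times[0,\norm u_{L_\infty}]$; in particular the integrands are bounded by a constant independent of $k$. By continuity of $\mathfrak a$ and of $F$, and since $u_k(x)\to u(x)$ everywhere while $u_k'(x)\to u'(x)$ for a.e.\ $x$, the integrands converge pointwise a.e.\ to $F\big(u(x),\mathfrak a(x,u(x))\abs{u'(x)}\big)$. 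A second application of the dominated convergence theorem gives $I(\mathfrak a,u_k)\to I(\mathfrak a,u)$, so the hypotheses of Lemma~\ref{uplift} are met and the conclusion follows.

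I do not expect a genuine obstacle here: the whole argument is routine measure theory, and the single point that really matters is that the Lipschitz assumption supplies the uniform bound $\norm{u_k'}_{L_\infty}\le L$, which is precisely what makes both limit passages available through the dominated convergence theorem. For a general $u\in\W(-1,1)$ this bound is lost, and recovering $(\ref{convergence})$ is exactly the work carried out in the sections that follow.
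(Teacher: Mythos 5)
Your proof is correct, and it reaches the conclusion by the same overall mechanism as the paper: produce piecewise linear approximants satisfying (\ref{convergence}) and then invoke Lemma \ref{uplift} (whose use of Lemma \ref{lowersemi} handles the rearranged side). The difference is in how the approximants are built. The paper proceeds in two stages, first approximating the Lipschitz function by $C^1$ functions via the standard result in Evans--Gariepy (uniform convergence, a.e.\ convergence of derivatives, uniformly bounded derivatives), then repeating the same pattern to pass from $C^1$ to piecewise linear, each time concluding (\ref{convergence}) by dominated convergence. You instead take the piecewise linear interpolants on the uniform mesh directly: the slope on each mesh interval is the average of $u'$ there, so $\abs{u_k'}\le L$, and the Lebesgue differentiation theorem (for intervals containing $x$ and shrinking nicely) gives $u_k'\to u'$ a.e., after which both limit passages follow from dominated convergence exactly as in the paper. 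Your one-step construction is more self-contained (no citation of the $C^1$ approximation theorem) at the modest cost of invoking the differentiation theorem; both arguments hinge on the same essential point you identify, namely that the Lipschitz bound supplies a uniform $L_\infty$ bound on the derivatives of the approximants, which is what fails for general $\W$ functions.
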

\begin{proof}
By Theorem 1 in Section 6.6 \cite{Gariepy}, any Lipschitz function $u$ can be approximated by $u_k \in C^1[-1, 1]$ such that
$$u_k \rightrightarrows u, \qquad u_k' \to u' \text{ a.e.}, \qquad |u_k'| \le const.$$
By the Lebesgue theorem relation (\ref{convergence}) holds.
In turn, $u_k$ can be approximated in the same way by piecewise linear functions.
Using Theorem \ref{linth} and applying Lemma \ref{uplift}, we complete the proof.
\end{proof}

\section{The inequality for $u \in \W(-1, 1)$ with an additional restriction on weight}
\label{ASC}

In this section we prove the inequality (\ref{toprove}) under the additional condition:
weight is monotonic in $x$ for $x \in [-1, 0]$ and $x \in [0, 1]$.

\begin{lm}
\label{Wapprox}
Let $\mathfrak a$ be a continuous function
and let $\mathfrak a(\cdot, u)$ be increasing on $[-1, 0]$ and decreasing on $[0, 1]$ for all $u \ge 0$.
Then for any function $u \in \W(-1, 1)$, $u \ge 0$,
there exists a sequence $\{u_k\} \subset Lip[-1, 1]$, such that the relation $(\ref{convergence})$ holds.
\end{lm}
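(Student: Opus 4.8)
The plan is to adapt to the weighted functional the construction from \cite{ASC} that rules out the Lavrentiev phenomenon for $\int F(u,u')$, the new ingredient being the shape of $\mathfrak a$: it is nonincreasing in $\abs x$, i.e.\ maximal at the origin and decreasing towards each endpoint $\pm1$.

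Two routine reductions come first. If $I(\mathfrak a,u)=\infty$ there is nothing to prove, since for every Lipschitz sequence $u_k\to u$ in $\W(-1,1)$ Lemma~\ref{lowersemi} forces $I(\mathfrak a,u_k)\to\infty$; so assume $I(\mathfrak a,u)<\infty$. Next, for $N\in\Nat$ put $u^{(N)}=\min(u,N)$; then $(u^{(N)})'=u'\chi_{\{u<N\}}$ and, as $F(\cdot,0)\equiv0$, $I(\mathfrak a,u^{(N)})=\int_{\{u<N\}}F(u,\mathfrak a(x,u)\abs{u'})\,dx$, which increases to $I(\mathfrak a,u)$ by monotone convergence, while $u^{(N)}\to u$ in $\W(-1,1)$. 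Hence it suffices to approximate each bounded $u^{(N)}$ and then diagonalize; from now on $0\le u\le C_0$.

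By Lemma~\ref{lowersemi} any Lipschitz sequence $u_k\to u$ in $\W(-1,1)$ automatically has $I(\mathfrak a,u)\le\liminf_kI(\mathfrak a,u_k)$, so the real task is to produce Lipschitz $u_k\to u$ in $\W(-1,1)$ with $\limsup_kI(\mathfrak a,u_k)\le I(\mathfrak a,u)$. The mechanism I would use: fix a large $M$, split $[-1,1]$ at the origin, and on $[0,1]$ pass to a new independent variable via an increasing map $\tau$ with $\tau'\approx\max(1,\abs{u'}/M)$ and $\tau(0)=0$, $\tau(1)=1$ (symmetrically on $[-1,0]$, with the new variable running towards $-1$). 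The reparametrized function $v=u\circ\tau^{-1}$ then has $\abs{v'}\approx\min(\abs{u'},M)$, hence is Lipschitz, and it preserves the values of $u$ at $0$ and at $\pm1$, so the two halves glue into a Lipschitz function on $[-1,1]$. The point of the splitting is that $\tau$ carries the steep parts of $u$ towards the endpoints $\pm1$, i.e.\ towards larger $\abs x$; since $\mathfrak a$ is nonincreasing in $\abs x$ this gives $\mathfrak a(\tau(x),\cdot)\le\mathfrak a(x,\cdot)$, so the weight is never increased along the way. Indeed, the contribution of $[0,1]$ equals $\int_0^1\tau'(x)\,F(u(x),\mathfrak a(\tau(x),u(x))\,\abs{u'(x)}/\tau'(x))\,dx$; on $\{\abs{u'}\le M\}$ one has $\tau'\approx1$ and $\mathfrak a(\tau(x),\cdot)\le\mathfrak a(x,\cdot)$, so this part is at most $\int_{\{\abs{u'}\le M\}}F(u,\mathfrak a(x,u)\abs{u'})\,dx$; on $\{\abs{u'}>M\}$ one has $\tau'\approx\abs{u'}/M$ and, by convexity of $F$ through the origin in the second argument (so that $F(w,\theta p)\le\theta F(w,p)$ for $0\le\theta\le1$, here with $\theta=M/\abs{u'}$), this part is at most $\int_{\{\abs{u'}>M\}}F(u,\mathfrak a(x,u)\abs{u'})\,dx\to0$. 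Altogether $I(\mathfrak a,v)\le I(\mathfrak a,u)+o(1)$ as $M\to\infty$.

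The hard part will be turning this picture into honest Lipschitz functions on $[-1,1]$ that converge to $u$ \emph{in the norm} of $\W(-1,1)$. Enforcing $\tau(0)=0$, $\tau(1)=1$ requires a compensating mild compression of $u$ somewhere on each half (or working on a slightly enlarged interval and then correcting the length and the boundary values), and this has to be organized so as not to spoil the inequality $\mathfrak a(\tau(x),\cdot)\le\mathfrak a(x,\cdot)$ where it is used, not to push the Lipschitz constant past $O(M)$, and not to inflate the energy — the last point being delicate precisely because $F$ is not uniformly continuous on unbounded subsets of $\Real_+\times\Real_+$. Even more delicate is the $\W$-convergence itself: a function and its reparametrization by a near-identity map need not be close in $W_1^1$, because $u'$ is merely in $L_1$; one must check that on $\{\abs{u'}\le M\}$ the map $\tau$ acts as an almost-translation and that the induced perturbations of $u'$ tend to $0$ in $L_1$. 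This is exactly the step where one needs the construction of \cite{ASC} rather than, say, a plain mollification of $u$, and where the monotonicity of $\mathfrak a$ on $[-1,0]$ and on $[0,1]$ is genuinely used. Once Lipschitz $u_k\to u$ in $\W(-1,1)$ with $\limsup_kI(\mathfrak a,u_k)\le I(\mathfrak a,u)$ are available, Lemma~\ref{lowersemi} turns the $\limsup$ into a limit, which is (\ref{convergence}).
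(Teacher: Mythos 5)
Your overall strategy is the right one and matches the paper's (which adapts \cite{ASC}): split at $0$, reparametrize each half by an expanding map with derivative $\ge 1$ that pushes the steep set towards the endpoint, use that the weight is nonincreasing in $\abs{x}$ so the push never increases $\mathfrak a$, and use convexity of $F$ through the origin to kill the contribution of $\{\abs{u'}>M\}$. But as written there are two genuine gaps, and they are exactly the content of the lemma rather than routine finishing touches. First, the endpoint normalization: you insist on $\tau(0)=0$ and $\tau(1)=1$, which (since $\tau'>1$ on a set of positive measure) forces a compensating compression somewhere. Compression is not harmless: for convex $F(w,\cdot)$ with $F(w,0)=0$ one has $t\,F\big(w,\tfrac{p}{t}\big)\ge F(w,p)$ for $t<1$, so the compressed region can only increase the integrand, and because $F$ is merely convex (possibly of arbitrarily fast growth) there is no a priori smallness of this excess; your claimed bound $I(\mathfrak a,v)\le I(\mathfrak a,u)+o(1)$ is therefore not established, and you yourself leave this step, together with the $\W$-convergence, as ``the hard part''. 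The paper's construction removes the difficulty by never imposing $\phi_h(1)=1$: it takes $\phi_h(0)=0$, $\phi_h'\ge1$, defines $u_h=v_h(\phi_h^{-1})$ with $\phi_h^{-1}$ \emph{restricted} to $[0,1]$ (the overshoot beyond $1$, of total length at most $\sum_k\abs{\beta_{h,k}}\to0$, is simply cut off --- no boundary value at $\pm1$ has to be preserved, only the value at the splitting point $0$, which the construction keeps automatically), extends the weight by $\mathfrak a(x,v):=\mathfrak a(1,v)$ for $x>1$, and then obtains $I_1(u_h)\to I_1(u)$ by the monotone convergence theorem, since $\phi_h(z)$ decreases in $h$ and $\mathfrak a(\cdot,v)$ decreases on $[0,1]$.

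Second, the convergence $u_k\to u$ in $\W(-1,1)$ is asserted but not proved; you explicitly defer it to the ``construction of \cite{ASC}''. In the paper this is done by first replacing $u$ with its piecewise-linear interpolation $v_h$ on an open cover $A_h$ of $\{\abs{u'}>h\}$, and then proving $\norm{u_h'-u'}_{L_1}\to0$ via Proposition~\ref{convToOne} (ASC's Lemma 2.7, $f(\phi_h)\to f$ in $L_1$ when $\phi_h'\ge1$ and $\phi_h\to\mathrm{id}$ a.e.) together with the smallness of $\abs{\phi_h(A_h)}$ and absolute continuity of the integral; a plain ``$\tau$ acts as an almost-translation'' statement is not enough when $u'$ is only in $L_1$. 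Since the whole point of the lemma is to carry out this adaptation in the weighted setting, leaving both of these steps open is a real gap, not a cosmetic one. (Minor remarks: the truncation $u^{(N)}=\min(u,N)$ is unnecessary, as $\W(-1,1)$ functions are already bounded; and your use of the normalization $F(\cdot,0)\equiv0$ in this lemma needs a one-line justification, e.g. splitting off $\int F(u,0)\,dx$, which converges under uniform convergence of $u_k$.)
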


\begin{proof}
We can assume that $I( \mathfrak a, u ) < \infty$.

We prove the assertion for the functional
$$I_1( u ) = \int\limits_0^1 F\big( u(x), \mathfrak a(x, u(x)) |u'(x)| \big) \, dx,$$
and the integral over $[-1, 0]$ can be reduced to $I_1$ by changing variable.

We modify the scheme from \cite[Theorem 2.4]{ASC}.
A part of the proof overlaps with \cite{ASC}, but we present a complete proof here for the reader's convenience.

We need the following auxiliary assertion.

\begin{prop}
\label{convToOne}
{\rm \cite[Lemma 2.7]{ASC}.}
Let $\phi_h: [-1, 1] \to \Real_+$ be a sequence of Lipschitz functions satisfying the conditions:
$\phi_h' \ge 1$ for almost every $x$ and all $h$, $\phi_h( x ) \to x$ for almost every $x$.
Then for any $f \in L_1(\Real)$ we have $f(\phi_h) \to f$ in $L_1(\Real)$.
\end{prop}

For $h \in \Nat$ we cover the set $\{ x \in [0, 1]: |u'(x)| > h \}$ with an open set $A_h$.
Without loss of generality, we can assume that
$A_{h + 1} \subset A_{h}$ and $\abs{A_h} \to 0$ for $h \to \infty$.

Denote by $v_h$ the nonnegative continuous function on $[0, 1]$,
coinciding with $u$ on $[0, 1] \setminus A_h$ and
linear on intervals forming $A_h$.
Then $v_h \to u$ in $\W$.
Now we modify $v_h$ to get Lipschitz functions.

Let $A_h = \cup_k \Omega_{h,k}$, where $\Omega_{h,k} = ( b_{h,k}^-, b_{h,k}^+ )$.
Denote
$$\alpha_{h,k} := \abs{\Omega_{h,k}}, \quad
\beta_{h,k} := v_h(b_{h,k}^+) - v_h(b_{h,k}^-) = u(b_{h,k}^+) - u(b_{h,k}^-).$$
Then $v'_h = \frac{\beta_{h,k}}{\alpha_{h,k}}$ in $\Omega_{h,k}$.
Note that
$$\sum_k \abs{\beta_{h,k}} \le \int\limits_{A_h} \abs{u'} \, dx \le \norm{u'}_{L_1(-1, 1)}< \infty,$$
and hence
$\sum_k \abs{\beta_{h,k}} \to 0$ as $h \to 0$ by the Lebesgue theorem.

We define the function $\phi_h \in \W(0, 1)$ as follows:
$$
\begin{aligned}
\phi_h( 0 ) &= 0 & & \\
\phi_h' &=  1 & \text{ in } & [0, 1] \setminus A_h,\\
\phi_h' &=  \max \Big( \frac{ \abs{\beta_{h,k}} }{ \alpha_{h,k} }, 1 \Big) & \text{ in } & \Omega_{h,k}.
\end{aligned}
$$	

Note that $\int_0^1 \abs{\phi_h'} \, dx \le 1 + \sum_k \abs{\beta_{h,k}} < \infty$.

Next, $\phi_h' \to 1$ in $L_1(0, 1)$:
$$\int \abs{\phi_h' - 1} \, dx = \sum\limits_k \Big( \max \Big( \frac{\abs{\beta_{h,k}}}{\alpha_{h,k}}, 1 \Big) - 1 \Big) \alpha_{h,k} \le
\sum\limits_k \abs{\beta_{h,k}} \to 0.$$
Thus $\phi_h$ satisfies the conditions of Proposition \ref{convToOne}.

Consider now $\phi_h^{-1}: [0, 1] \to [0, 1]$ --- the restriction to $[0, 1]$ of the inverse to $\phi_h$.
Then
$$
\begin{aligned}
\phi_h^{-1} ( 0 ) &= 0 & & \\
( \phi_h^{-1} )' &=  1 & \text{ in } & [0, 1] \setminus \phi_h( A_h ),\\
( \phi_h^{-1} )' &=  \min \Big( \frac{ \alpha_{h,k} }{ \abs{ \beta_{h,k} } }, 1 \Big) & \text{ in } & [0, 1] \cap \phi_h( \Omega_{h,k} ).
\end{aligned}
$$

Let $u_h = v_h( \phi_h^{-1} )$.
Note that $u_h(0) = u(0)$, and
\begin{align*}
u_h' &=  v_h'( \phi_h^{-1} ) \cdot ( \phi_h^{-1} )' = u'( \phi_h^{-1} ) & \text{ in } & [0, 1] \setminus \phi_h( A_h ),\\
u_h' &=  v_h'( \phi_h^{-1} ) \cdot ( \phi_h^{-1} )' = 
\sign{ \beta_{h,k} } \cdot \min \Big( 1, \frac{ \abs{ \beta_{h,k} } }{ \alpha_{h,k} } \Big) & \text{ in } & [0, 1] \cap \phi_h( \Omega_{h,k} ).
\end{align*}
Thus, $u_h$ is Lipschitz since $u'$ is bounded in $[0, 1] \setminus A_h$.

We claim that $u_h \to u$ in $\W(0, 1)$. Indeed, it is sufficient to estimate

$$\norm{u_h' - u'}_{L_1} \le \int\limits_{[0, 1] \setminus \phi_h(A_h)} \abs{u_h' - u'} + 
\int\limits_{[0, 1] \cap \phi_h(A_h)} \abs{u_h'} + \int\limits_{[0, 1] \cap \phi_h(A_h)} \abs{u'} =: P_h^1 + P_h^2 + P_h^3.$$
$$P_h^1 = \int\limits_{[0, 1] \setminus \phi_h( A_h )} \abs{u'( \phi_h^{-1} ) - u'} \, dx =
\int\limits_{\phi_h^{-1} ( [0, 1] ) \setminus A_h} \abs{u' - u'( \phi_h )} \, dz \le
\int\limits_{[0, 1]} \abs{u' - u'( \phi_h )} \, dz.$$
By Proposition \ref{convToOne}, $P_h^1 \to 0$.
Further,
$$P_h^2 \le \abs{\phi_h( A_h )} = \sum\limits_k \abs{\phi_h( \Omega_{h,k} )} = \sum\limits_k \max (\abs{\beta_{h,k}}, \alpha_{h,k})
\le \sum\limits_k \alpha_{h,k} + \sum\limits_k \abs{\beta_{h,k}} \to 0.$$
Finally, $P_h^3 \to 0$ by the absolute continuity of the integral, and the assertion is proved.

It remains to show that $I_1( u_h ) \to I_1( u )$.

\begin{multline*}
I_1( u_h ) = \int\limits_{[0, 1] \setminus \phi_h( A_h )} F\big( u_h( x ), \mathfrak a( x, u_h(x) ) |u_h'( x )| \big) \, dx +\\
\int\limits_{[0, 1] \cap \phi_h( A_h )} F\big( u_h( x ), \mathfrak a( x, u_h(x) ) |u_h'( x )| \big) \, dx =: \hat{P_h^1} + \hat{P_h^2}.
\end{multline*}
Since $u \in \W(0, 1)$ then $u \in L_\infty( [0, 1] )$.
Denote $\norm{u}_\infty = r$.
Then $\norm{u_h}_\infty < 2r$ for sufficiently large $h$.
Also, $\abs{u_h'} \le 1$ almost everywhere in $\phi_h( A_h )$.
Then $\hat{P_h^2} \le M_F \abs{\phi_h( A_h )} \to 0$, where
$$M_F = \max\limits_{[-2r, 2r] \times [-M_{\mathfrak a}, M_{\mathfrak a}]} F;\quad M_{\mathfrak a} = \max\limits_{[0, 1] \times [-2r, 2r]} \mathfrak a.$$

Further,
\begin{multline*}
\hat{P_h^1} = \int\limits_{ [0, 1] \setminus \phi_h( A_h ) }
	F\big( u( \phi_h^{-1}( x ) ), \mathfrak a( x, u( \phi_h^{-1}( x ) ) |u'( \phi_h^{-1}( x ) ) ( \phi_h^{-1} )'| ) \big) \, dx
\\ =\int\limits_{ \phi_h^{-1}( [0, 1] ) \setminus A_h } F\big( u( z ), \mathfrak a( \phi_h( z ), u( z ) ) |u'( z )| \big) \, dz
\\ = \int\limits_{ [0, 1] } F\big( u( z ), \mathfrak a( \phi_h( z ), u( z ) ) |u'( z )| \big) \chi_{ \phi_h^{-1}( [0, 1] ) \setminus A_h } \, dz.
\end{multline*}
The last equality, generally speaking, does not make sense, since $\phi_h( z )$ can take values outside $[0, 1]$.
Let us define $\mathfrak a( z, u ) = \mathfrak a( 1, u )$ for $z > 1$. Now the expression is correct.
Note that $\chi_{\phi_h^{-1}( [0, 1] ) \setminus A_h}$ increases,
since sets $\phi_h^{-1}( [0, 1] )$ increase and sets $A_h$ decay,
that is $\phi_{h_1}^{-1}( [0, 1] ) \subset \phi_{h_2}^{-1}( [0, 1] )$ and $A_{h_1} \supset A_{h_2}$ for $h_1 \le h_2$.
Since $\mathfrak a$ is decreasing on $[0, 1]$ (in fact, on $\phi_h( [0, 1] )$) and $\phi_h( z )$ is decreasing in $h$,
then $\mathfrak a( \phi_h( z ) )$ is increasing in $h$.
We apply the monotone convergence theorem and get
$$\hat{P_h^1} \to \int\limits_{[0, 1]} F\big( u( z ), \mathfrak a( z, u( z ) ) |u'( z )| \big) \, dz.$$

\end{proof}

\begin{rem}
Obviously, the proof works for any interval $[x_0, x_1]$ with function $u$ pinned at $x_0$,
provided the weight $\mathfrak a$ is decreasing in $x$ on $[x_0, x_1]$.
That is there exists $\{u_h\}$, such that
\begin{gather*}
u_h(x_0) = u(x_0); \qquad u_h \to u \text{ in } \W(x_0, x_1);\\
\int\limits_{x_0}^{x_1} F\big( u_h(x), \mathfrak a(x, u_h(x)) \abs{u_h'(x)} \big) \to \int\limits_{x_0}^{x_1} F\big( u(x), \mathfrak a(x, u(x)) \abs{u'(x)} \big).
\end{gather*}
Similarly, if $\mathfrak a$ is increasing in $x$, the same works for functions $u$ pinned at the right end of the segment.
\end{rem}

\begin{cor}
Suppose that the function $\mathfrak a$ is continuous, even in $x$, decreasing on $[0, 1]$ and satisfies $(\ref{almostConcave})$.
Then for every $u \in \W(-1, 1)$ the inequality (\ref{toprove}) holds.
\end{cor}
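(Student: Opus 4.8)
The plan is to deduce this corollary by chaining together results already established: Theorem~\ref{linth} (the inequality for piecewise linear functions), the corollary following Lemma~\ref{uplift} (its extension to Lipschitz functions), Lemma~\ref{Wapprox} (existence of a good Lipschitz approximating sequence), and Lemma~\ref{uplift} (the ``uplifting'' of the inequality along such a sequence). Throughout one considers only non-negative $u$, since the rearrangement $u^*$ is defined for $u\ge 0$.

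First I would check that the hypotheses of Lemma~\ref{Wapprox} are met. Since $\mathfrak a(\cdot,v)$ is even and decreasing on $[0,1]$, for $-1\le x_1<x_2\le 0$ we have $-x_1>-x_2\ge 0$ with both points in $[0,1]$, hence $\mathfrak a(x_1,v)=\mathfrak a(-x_1,v)\le \mathfrak a(-x_2,v)=\mathfrak a(x_2,v)$; that is, $\mathfrak a(\cdot,v)$ is increasing on $[-1,0]$. Thus $\mathfrak a$ is continuous, increasing on $[-1,0]$ and decreasing on $[0,1]$, exactly as Lemma~\ref{Wapprox} requires. Consequently, for every non-negative $u\in\W(-1,1)$ there is a sequence $\{u_k\}\subset Lip[-1,1]$ of non-negative functions with $u_k\to u$ in $\W(-1,1)$ and $I(\mathfrak a,u_k)\to I(\mathfrak a,u)$; in other words (\ref{convergence}) holds.

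Next, since $\mathfrak a$ is even and satisfies (\ref{almostConcave}), Theorem~\ref{linth} gives (\ref{toprove}) for all non-negative piecewise linear $u$, and the corollary following Lemma~\ref{uplift} then extends it to all non-negative Lipschitz functions. Now take $B$ to be the class of non-negative Lipschitz functions and $A$ the class of non-negative functions in $\W(-1,1)$: the two facts just recorded are precisely the hypotheses of Lemma~\ref{uplift}, whose conclusion is that (\ref{toprove}) holds for every non-negative $u\in\W(-1,1)$.

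I do not expect a genuine obstacle here; the substantive work sits in Theorem~\ref{linth}, Lemma~\ref{lowersemi}, Lemma~\ref{uplift}, and especially Lemma~\ref{Wapprox}. The only points to verify are bookkeeping ones: that ``even and decreasing on $[0,1]$'' forces the correct direction of monotonicity on $[-1,0]$ (done above), and that the approximants produced by Lemma~\ref{Wapprox} are non-negative — which they are, being obtained from $u\ge 0$ by linear interpolation and composition with monotone changes of variable.
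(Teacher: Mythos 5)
Your proposal is correct and follows the same route as the paper: the paper's proof of this corollary is exactly the combination of Lemma~\ref{Wapprox} (which applies because evenness plus monotonicity on $[0,1]$ gives the required monotonicity on $[-1,0]$) with Lemma~\ref{uplift}, the inequality on the Lipschitz class having already been secured via Theorem~\ref{linth} and the corollary following Lemma~\ref{uplift}. Your write-up merely makes explicit the bookkeeping that the paper leaves implicit.
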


\begin{proof}
The statement follows from Lemmata \ref{uplift} and \ref{Wapprox} immediately.
\end{proof}

\section{The result in the general case}
\label{moveForth}

Now we want to get rid of the monotonicity restriction on the weight.
We do this in several steps.

To begin, we note that all properties of the function $\mathfrak a$ are of interest
only in the neighborhood of the graphs of functions $u$ and $u^*$.

We introduce the following conditions each of which, being added to the previous ones, defines a smaller class of weight functions:

\bigskip

\smallskip
\noindent
$(H1)$ $\mathfrak a(x, v)$ satisfies (\ref{almostConcave}), is even in $x$ and $I(\mathfrak a, u) < \infty$.
\smallskip

\bigskip
\noindent
$(H2)$ the number of zeros of $\mathfrak a(\cdot, v)$ is bounded by a constant independent of $v$
for all $v \in [\min u(x), \max u(x)]$ such that $\mathfrak a(\cdot, v) \not \equiv 0$.

\bigskip
\noindent
$(H3)$ If $\mathfrak a(x_0, u(x_0)) = 0$ for some $x_0$, then $\mathfrak a(\cdot, u(x_0)) \equiv 0$.
Moreover, $\lim\limits_{k \to \infty} D_k(\mathfrak a, U(\mathfrak a)) = 0$, where
$$U(\mathfrak a) := \{ v \in [\min u(x), \max u(x)]: \mathfrak a(\cdot, v) \not \equiv 0 \},$$
\begin{equation}
\label{bigD}
D_k(\mathfrak a, U): = \sup \limits_{v \in U}
\frac{\max \limits_{\abs{x_1 - x_2} \le \frac{2}{k}} \abs{\mathfrak a(x_1, v) - \mathfrak a(x_2, v)}}
{\min \limits_{\dist (x, u^{-1} (v)) \le \frac{2}{k}} \mathfrak a(x, v)}.
\end{equation}

\bigskip
\noindent
$(H4)$ There exists an even $k$, such that $\mathfrak a(\cdot, v)$ are linear for each $v$ on each of the segments
$[-1 + \frac{2i}{k}, -1 + \frac{2(i + 1)}{k}]$.

\bigskip
\noindent
$(H5)$ The difference between the
set of $v \in \Real_+$, for which $\mathfrak a(\cdot, v)$ has segments of constant values,
and the set of $v \in \Real_+$ such that $\mathfrak a(\cdot, v) \equiv 0$
has zero measure.

\bigskip
\noindent
$(H6)$ The segment $[-1, 1]$ can be represented as a unity of touching segments
on each of which $\mathfrak a$ does not change the monotonicity with respect to $x$ in a $v$-neighborhood of the graph of the function $u$.

\bigskip
\noindent
$(H7)$ Let $x_1 < x_2 < x_3$,
let $\mathfrak a(\cdot, v)$ decrease for $x \in [x_1, x_2]$ in a $v$-neighborhood of the graph of the function $u$,
and let $\mathfrak a(\cdot, v)$ increase for $x \in [x_1, x_2]$ in a $v$-neighborhood of the graph of the function $u$.
Then we have $\mathfrak a(\cdot, v) \equiv 0$ in a $v$-neighborhood of $u(x_2)$.

\bigskip

The weights satisfying $(H1)$ will be called {\it admissible for a given $u$}.

\medskip

Now we can formulate the main assertion of our work.
\begin{thm}
\label{mainThm}
Suppose $F \in \mathfrak{F}$, the function $u \in \W(-1, 1)$ is non-negative,
and the weight function $\mathfrak a: [-1, 1] \times \Real_+ \to \Real_+$ is continuous
and admissible for $u$.
Then the inequality $(\ref{toprove})$ holds.
\end{thm}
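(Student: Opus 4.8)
The plan is to prove Theorem~\ref{mainThm} by a reduction argument that successively strengthens the hypotheses on the weight $\mathfrak a$ while preserving the validity of~(\ref{toprove}), until we reach a class of weights for which the inequality is already known — in effect, the class handled by the Corollary at the end of Section~\ref{ASC} (even, monotone-on-each-half weights) or by Section~6 (weights that first increase, then decrease). The key observation, already flagged in the excerpt, is that only the behaviour of $\mathfrak a$ in a neighbourhood of the graphs of $u$ and $u^*$ matters; this lets us modify $\mathfrak a$ freely away from those graphs. So the body of the proof is a chain of lemmata of the form: \emph{if the theorem holds for all weights satisfying $(Hj)$, then it holds for all weights satisfying $(H(j-1))$}, implemented by a suitable approximation or decomposition of a weight satisfying $(H(j-1))$ by weights satisfying $(Hj)$.

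Concretely, I would organise the reduction as follows. First, from $(H1)$ to $(H2)$: if $\mathfrak a(\cdot,v)$ has infinitely many zeros, Lemma~\ref{periodicity} forces $\mathfrak a(\cdot,v)$ to be periodic there, which after a relabelling of the range of $u$ can be treated as a finite-zero situation; alternatively one separates the $v$-levels where $\mathfrak a(\cdot,v)\equiv 0$ (which contribute nothing by the normalisation $F(\cdot,0)\equiv 0$) from the rest. Next, $(H2)\Rightarrow(H3)$ via a scaling/Egorov-type argument controlling the oscillation quotient $D_k(\mathfrak a,U(\mathfrak a))$: one passes to a piecewise linear interpolation $\mathfrak a_k$ of $\mathfrak a$ (which still satisfies~(\ref{almostConcave}) by Lemma~\ref{piecewiseLinearConcave}), uses the continuity of $F$ and $\mathfrak a$ together with $I(\mathfrak a,u)<\infty$ to show $I(\mathfrak a_k,u)\to I(\mathfrak a,u)$ and, since $\mathfrak a_k\to\mathfrak a$ uniformly, $I(\mathfrak a_k,u^*)\to I(\mathfrak a,u^*)$ as well; this simultaneously pushes us toward $(H4)$. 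Condition $(H5)$ is a perturbation: replace $\mathfrak a(\cdot,v)$ by $\mathfrak a(\cdot,v)+\eta\,g(\cdot)$ with $g$ a fixed strictly concave (hence (\ref{almostConcave})-respecting) even bump — by Lemma~\ref{maxSumConcave} the sum still satisfies~(\ref{almostConcave}) — to destroy spurious flat pieces, then let $\eta\to0$ using weak lower semicontinuity (Lemma~\ref{lowersemi}) on the $u^*$ side and dominated convergence on the $u$ side. Finally $(H6)$ and $(H7)$ handle the geometry of the monotonicity intervals: a weight satisfying $(H4)$ is piecewise linear in $x$, hence automatically divides $[-1,1]$ into finitely many monotonicity segments, and $(H7)$ records the structural fact — derivable from~(\ref{almostConcave}) and evenness much as in the proof of Lemma~\ref{periodicity} — that a decrease immediately followed by an increase can only occur across a zero of $\mathfrak a$; combining these with the patching results of Section~6 (the ``first increases, then decreases'' case) and Lemma~\ref{uplift} closes the induction.

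The main obstacle, and the step where the real work lies, is the passage into the class $(H3)$ — i.e. gaining quantitative control of the oscillation-to-minimum ratio $D_k(\mathfrak a,U(\mathfrak a))$ uniformly in the level $v$. Away from the zero set of $\mathfrak a(\cdot,v)$ the denominator in~(\ref{bigD}) is bounded below, but near a zero it degenerates, and this is exactly where the Lavrentiev-type phenomenon could re-enter through the back door. The resolution must exploit the rigidity supplied by Lemma~\ref{periodicity}: near a zero of $\mathfrak a(\cdot,v)$ the weight looks (up to the periodic structure) like a fixed profile, so the quotient $D_k$ is controlled by the modulus of continuity of a single function rather than by the possibly wild family $\{\mathfrak a(\cdot,v)\}_v$. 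Everything downstream — the approximation estimates for $I(\mathfrak a,u)$, the use of Jensen as in Theorem~\ref{linth}, and the final assembly via Lemmata~\ref{uplift} and~\ref{Wapprox} — is then comparatively routine, though bookkeeping-heavy.
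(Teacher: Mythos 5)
Your skeleton does match the paper's architecture (successive reduction through $(H1)$--$(H7)$, base case obtained by applying the Section~\ref{ASC} approximation on each monotonicity segment and patching, limits assembled via Lemma~\ref{uplift}), but two of your load-bearing claims do not hold as stated. First, $(H7)$ is \emph{not} derivable from evenness and (\ref{almostConcave}): the weight $\mathfrak a(x,v)=1+\abs{x}$ is even, satisfies (\ref{almostConcave}) (since $2+\abs{s}+\abs{t}\ge 1+\abs{1-t+s}$), decreases on $[-1,0]$ and increases on $[0,1]$, yet never vanishes. So a decrease immediately followed by an increase need not occur across a zero, and nothing like Lemma~\ref{periodicity} rescues this. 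The paper does not deduce $(H7)$; it \emph{manufactures} it, applying Lemma~\ref{zeroApprox} to annihilate the weight near the set of levels where the graph of $u$ passes from a decreasing to an increasing rectangle (a set which is negligible thanks to $(H5)$--$(H6)$, with Proposition~\ref{levelDerivative} making the modification harmless); without this your patching of the Lipschitz approximants at the junction points $\hat{x}_j$ has nothing to glue with. Similarly, your claim that $(H4)$ automatically yields finitely many monotonicity segments along the graph ignores that $u$ may cross infinitely many horizontal boundaries; the paper's Step~3 needs the argument that each crossing of a zero stripe costs at least $\frac{2}{\ell}$ of $\int\abs{u'}$, so only finitely many crossings occur.

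The second genuine gap is that all your weight-perturbation limits are taken without a majorant and with the wrong tool. Admissibility includes $I(\mathfrak a,u)<\infty$, and both the interpolant $\mathfrak a_k$ and your bump perturbation $\mathfrak a+\eta g$ may \emph{increase} the weight along the graph of $u$, so $I(\mathfrak a_k,u)$ and $I(\mathfrak a+\eta g,u)$ can be infinite, or at least need not converge to $I(\mathfrak a,u)$; continuity of $F$ gives only pointwise convergence, and Lemma~\ref{lowersemi} is inapplicable because here $u$ is fixed and the weight varies --- moreover on the $u$-side you need $\limsup$ of the perturbed functional to be at most $I(\mathfrak a,u)$, which semicontinuity cannot give. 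This is precisely why the paper introduces $D_k$ in (\ref{bigD}), the correction $\mathfrak c_k=(1-D_k)\mathfrak a_k$, and the calibrated perturbation $\alpha\Lambda(x)\tilde t(v)$: they force the modified weight to stay below $\mathfrak a$ along the graph of $u$, so that $F\big(u,\mathfrak a(x,u)\abs{u'}\big)$ is an integrable majorant and $I(\text{modified},u)\to I(\mathfrak a,u)$, while $I(\text{modified},u^*)\ge I(\text{smaller weight},u^*)$ comes for free from monotonicity of $F$. Your appeal to ``periodic rigidity near zeros'' to control $D_k$ is also not an argument: near levels where $\mathfrak a(\cdot,v)$ degenerates the quotient (\ref{bigD}) can blow up, and the paper avoids this not by periodicity but by first excising, via Lemma~\ref{zeroApprox} and the finite exceptional set $T$ supplied by $(H2)$, neighborhoods of the levels $u(T)\cup u^*(T)$, after which the denominator is bounded away from zero. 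So the reductions $(H2)\Rightarrow(H3)$, $(H3)\Rightarrow(H4)/(H5)$ and the justification of $(H6)$--$(H7)$ each contain a genuine gap as you have written them.
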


We prove the inequality (\ref{toprove}) under conditions $(H1)-(H7)$,
and then get rid of extra conditions one by one.

For the proof we need the following facts.

\begin{prop}
\label{levelDerivative}
{\rm \cite[Theorem 6.19]{LL} }
For every $u \in \W(-1, 1)$ and for an arbitrary set $A \subset \Real_+$ of zero measure,
$u'(x) = 0$ almost everywhere in $u^{-1}(A)$.
\end{prop}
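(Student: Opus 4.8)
\section*{Proof plan for Proposition \ref{levelDerivative}}

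The plan is to work with the absolutely continuous representative of $u$ and to reduce the claim to a single measure estimate on the set where $u$ has nonzero derivative. Recall that every $u \in \W(-1,1)$ admits a representative that is absolutely continuous on $[-1,1]$, is classically differentiable at almost every point, and whose classical derivative agrees almost everywhere with the weak derivative $u'$. Writing $E := u^{-1}(A)$, it thus suffices to show that the classical derivative vanishes almost everywhere on $E$; since off a null set it is nonzero precisely on
$$E_0 := \set{x \in E :\ u \text{ is differentiable at } x \text{ and } u'(x) \ne 0},$$
this amounts to proving $\abs{E_0} = 0$. I would split $E_0 = \bigcup_{n} E_n$ with $E_n := \set{x \in E_0 :\ \abs{u'(x)} > 1/n}$ and argue, for each fixed $n$, that $E_n$ is null (all measures may be taken as outer measures, so no measurability of the auxiliary sets is needed).

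The heart of the argument is a local expansivity estimate that comes purely from differentiability. Fix $n$ and put $\lambda := \frac{1}{2n}$. For each $x \in E_n$ the inequality $\abs{u'(x)} > 1/n$ furnishes a radius $\delta(x) > 0$ (fix one such choice) with $\abs{u(x+h) - u(x)} \ge \lambda\abs{h}$ whenever $\abs{h} \le \delta(x)$. To make this uniform I would partition $E_n = \bigcup_j E_{n,j}$ along $E_{n,j} := \set{x \in E_n :\ \delta(x) \ge 1/j}$ and cover $[-1,1]$ by finitely many intervals $I$ of length at most $1/j$. On each piece $E_{n,j}\cap I$ any two points $x, x'$ satisfy $\abs{u(x)-u(x')} \ge \lambda\abs{x-x'}$; in particular $u$ is injective there, and its inverse $u^{-1}\colon u(E_{n,j}\cap I) \to E_{n,j}\cap I$ is Lipschitz with constant $1/\lambda = 2n$. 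Since a Lipschitz map enlarges outer measure by at most its constant, this yields
$$\abs{E_{n,j}\cap I} \le 2n \cdot \abs{u(E_{n,j}\cap I)}.$$

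It remains only to note that $u(E_{n,j}\cap I) \subset A$, because $E_{n,j} \subset E_0 \subset E = u^{-1}(A)$; hence $\abs{u(E_{n,j}\cap I)} \le \abs{A} = 0$ and therefore $\abs{E_{n,j}\cap I} = 0$. Summing over the finitely many intervals $I$ gives $\abs{E_{n,j}} = 0$; as $E_n = \bigcup_j E_{n,j}$ is a countable (increasing) union of null sets it is null, whence $\abs{E_0} = 0$ and the claim follows. I expect the main obstacle to be exactly this uniformization step: the radius $\delta(x)$ produced by pointwise differentiability carries no a priori lower bound, and it is the decomposition into the sets $E_{n,j}$ together with the observation ``expansive implies Lipschitz inverse'' that converts a merely pointwise estimate into a genuine measure bound. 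An alternative route sidesteps this by invoking the one-dimensional area (Banach indicatrix) formula $\int_E \abs{u'}\,dx = \int_{\Real} \#(E \cap u^{-1}(y))\,dy$ valid for absolutely continuous $u$: with $E = u^{-1}(A)$ the multiplicity $\#(E \cap u^{-1}(y))$ vanishes for every $y \notin A$, so the right-hand side collapses to an integral over the null set $A$ and equals zero, forcing $\abs{u'} = 0$ almost everywhere on $E$.
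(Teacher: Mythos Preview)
Your argument is correct. Note, however, that the paper does not prove this proposition at all: it is simply quoted as \cite[Theorem 6.19]{LL} and used as a black box. So there is no ``paper's own proof'' to compare against; you have supplied a self-contained justification where the paper relies on a citation.

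Both routes you outline are standard and sound. The first --- stratifying by a lower bound on $\abs{u'}$ and then by a uniform radius of differentiability to obtain local bi-Lipschitz control --- is exactly the hands-on approach one finds, for instance, in the proof of Sard-type results; your use of outer measure throughout neatly sidesteps any measurability issues for the auxiliary sets $E_{n,j}$. The second route via the Banach indicatrix (one-dimensional area formula) is shorter once that formula is available, and is in fact closer in spirit to how Lieb--Loss handle it. Either fills the gap the paper leaves to the reference.
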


\begin{lm}
\label{zeroApprox}
Suppose that $u \in \W(-1, 1)$ is nonnegative.
Let a closed set $W \subset \Real_+$ be such that
the set of $v \in W$, for which $\mathfrak a(\cdot, v) \not\equiv 0$, has zero measure.
Then there exists an increasing sequence of weights $\mathfrak b_{\ell}$, which satisfy

1) $\mathfrak b_{\ell}(\cdot, v) \rightrightarrows \mathfrak a(\cdot, v)$ for almost all $v$;

2) $\mathfrak b_{\ell}(\cdot, v) \equiv 0$ for every $v$ in some neighborhood of $W$ (the neighborhood depends on $\ell$);

3) $I(\mathfrak b_{\ell}, u) \to I(\mathfrak a, u)$ and $I(\mathfrak b_{\ell}, u^*) \to I(\mathfrak a, u^*)$.
\end{lm}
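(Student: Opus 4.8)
The plan is to produce $\mathfrak b_\ell$ by damping $\mathfrak a$ with a cut-off that depends only on the second variable and vanishes near $W$, and then to pass to the limit by monotone convergence. If $W=\emptyset$ the claim is trivial (take $\mathfrak b_\ell\equiv\mathfrak a$), so assume $W\ne\emptyset$ and set $d(v):=\dist(v,W)$; since $W$ is closed, $d$ is continuous and $d(v)=0\iff v\in W$. For $\ell\in\Nat$ I would use the continuous cut-offs
$$\eta_\ell(v):=\min\Big(1,\ \ell\cdot\max\big(0,\ d(v)-\tfrac1\ell\big)\Big)\in[0,1],\qquad \mathfrak b_\ell(x,v):=\eta_\ell(v)\,\mathfrak a(x,v).$$
Each $\mathfrak b_\ell$ is continuous and nonnegative, hence a legitimate weight; the sequence $(\eta_\ell)$ is nondecreasing in $\ell$ at every $v$, so $(\mathfrak b_\ell)$ is an increasing sequence of weights; and $\eta_\ell\equiv 0$ on the open set $\{v:d(v)<1/\ell\}\supset W$, which is property 2.

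For property 1, let $N:=\{v\in W:\mathfrak a(\cdot,v)\not\equiv 0\}$, so $\abs N=0$ by hypothesis. If $v\notin W$ then $d(v)>0$, hence $\eta_\ell(v)\to1$ and $\norm{\mathfrak b_\ell(\cdot,v)-\mathfrak a(\cdot,v)}_{C[-1,1]}=(1-\eta_\ell(v))\max_{x}\mathfrak a(x,v)\to0$; if $v\in W\setminus N$ then $\mathfrak a(\cdot,v)\equiv0$ and $\mathfrak b_\ell(\cdot,v)\equiv0\equiv\mathfrak a(\cdot,v)$. Thus $\mathfrak b_\ell(\cdot,v)\rightrightarrows\mathfrak a(\cdot,v)$ for every $v\notin N$, i.e.\ for a.e.\ $v$.

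For property 3 I would argue by the monotone convergence theorem. Since $F\in\mathfrak F$ is nondecreasing in its second argument and $F(\cdot,0)\equiv0$, the integrands
$$g_\ell(x):=F\big(u(x),\,\eta_\ell(u(x))\,\mathfrak a(x,u(x))\,\abs{u'(x)}\big)\ge0$$
increase with $\ell$, and one checks $g_\ell(x)\to F\big(u(x),\mathfrak a(x,u(x))\abs{u'(x)}\big)$ a.e.: where $u(x)\notin W$ this is the convergence $\eta_\ell(u(x))\to1$; where $u(x)\in W\setminus N$ we have $\mathfrak a(x,u(x))=0$, so both sides are $F(u(x),0)=0$; and on $u^{-1}(N)$, Proposition \ref{levelDerivative} gives $u'=0$ a.e.\ (because $\abs N=0$), so again both sides vanish. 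Hence $I(\mathfrak b_\ell,u)\to I(\mathfrak a,u)$, finite or not; running the identical argument with $u^*\in\W(-1,1)$ in place of $u$ (Proposition \ref{levelDerivative} applies to $u^*$ as well) gives $I(\mathfrak b_\ell,u^*)\to I(\mathfrak a,u^*)$.

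The only delicate point is the exceptional set $N=\{v\in W:\mathfrak a(\cdot,v)\not\equiv0\}$: on it the functions $\mathfrak b_\ell(\cdot,v)$ are identically zero and do \emph{not} approximate $\mathfrak a(\cdot,v)$, so properties 1 and 3 would fail were it not that $\abs N=0$ and that $u$, $u^*$ carry no derivative over $u^{-1}(N)$, $(u^*)^{-1}(N)$ — precisely what Proposition \ref{levelDerivative} supplies. Everything else is routine: continuity of $\mathfrak a$ on the relevant compacta and the monotone convergence theorem.
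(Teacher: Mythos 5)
Your construction is essentially the paper's own: the damping factor $\min(1,\max(0,\ell\dist(v,W)-1))$ depending only on $v$, vanishing in the $\frac1\ell$-neighborhood of $W$ and equal to $1$ outside the $\frac2\ell$-neighborhood, followed by the monotone convergence theorem and Proposition \ref{levelDerivative} to dispose of the null set $\{v\in W:\mathfrak a(\cdot,v)\not\equiv0\}$ (the paper splits the domain into $u^{-1}(\Real_+\setminus W)$, $u^{-1}(W_1)$, $u^{-1}(W_2)$ rather than checking pointwise a.e.\ convergence globally, but this is the same argument). The proof is correct.
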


\begin{rem}
If $a$ is admissible for $u$ then $b_{\ell}$ are also admissible.
\end{rem}

\begin{proof}
Take $\rho(d) := \min(1, \max(0, d))$,
$$\mathfrak b_{\ell}(x, v) := \mathfrak a(x, v) \cdot \rho(\ell \dist(v, W) - 1) \le \mathfrak a(x, v).$$
This weight is equal to zero in $\left(\frac{1}{\ell}\right)$-neighborhood of $W$.
In addition, $\mathfrak b_{\ell} \equiv \mathfrak a$ outside the $\left(\frac{2}{\ell}\right)$-neighborhood of $W$ and
$\mathfrak b_{\ell}(x, v)$ increases in $\ell$.
Thus, $\mathfrak b_{\ell}(\cdot, v) \rightrightarrows \mathfrak a(\cdot, v)$ for almost all $v$.
By the monotone convergence theorem
$I(u^{-1}(\Real_+ \setminus W), \mathfrak b_{\ell}, u) \nearrow I(u^{-1}(\Real_+ \setminus W), \mathfrak a, u)$.

Divide the set $W$ into $W_1 := \{v \in W: \mathfrak a(\cdot, v) \equiv 0\}$ and $W_2 = W \setminus W_1$.
Then
$$
\begin{aligned}
I(u^{-1}(W_1), \mathfrak b_{\ell}, u) &= I(u^{-1}(W_1), \mathfrak a, u),\\
I(u^{-1}(W_2), \mathfrak b_{\ell}, u) &= \int\limits_{x \in u^{-1}(W_2)} F\big(u(x), \mathfrak b_{\ell}(x, u(x)) |u'(x)|\big) \, dx.
\end{aligned}
$$
By Proposition \ref{levelDerivative}, $u'(x) = 0$ almost everywhere on $u^{-1}(W_2)$.
Thus
$$I(u^{-1}(W_2), \mathfrak b_{\ell}, u) = \int\limits_{x \in u^{-1}(W_2)} F\big(u(x), 0\big) \, dx = 0.$$
Similarly, $I(u^{-1}(W_2), \mathfrak a, u) = 0$. Hence $I(\mathfrak b_{\ell}, u) \to I(\mathfrak a, u)$.
The second relation in {\it 3)} is proved by the same arguments.
\end{proof}

We proceed to the proof of the theorem.

\bigskip
{\bf Step 1.} {\it Let $u \in \W(-1, 1)$ and let the weight $\mathfrak a$ satisfy the conditions $(H1)-(H7)$.
Then the inequality (\ref{toprove}) holds.}

Divide the segment $[-1, 1]$ into touching subsegments $\Delta_j$, each consisting of two parts.
On the left part of each $\Delta_j$ the weight $\mathfrak a$ increases in $x$ in a neighborhood
of the graph of $u(x)$. On the right part it decreases.
On each $\Delta_j$ we can apply the construction from the previous section
for approximating $u$ with Lipschitz functions $u_n$.
This gives us $I(\Delta_j, \mathfrak a, u_n) \to I(\Delta_j, \mathfrak a, u)$.

However, approximating functions $u_n$ have discontinuities at the borders of the segments $\Delta_j$
(denote them by $\hat{x}_j$).

Note that according to the condition $(H7)$ one can choose points $\hat{x}_j$ so
that $\mathfrak a \equiv 0$ in $(x, v)$-neighborhoods of the points $(\hat{x}_j, u(\hat{x}_j))$.

Next, substitute functions $u_n$ in these neighborhoods of $\hat{x}_j$ with linear pieces
making $u_n$ continuous on $[-1, 1]$.
In view of the above, this does not change the integrals $I(\Delta_j, \mathfrak a, u_n)$,
and we get $I(\mathfrak a, u_n) \to I(\mathfrak a, u)$.

By Lemma \ref{uplift} we obtain (\ref{toprove}).

\bigskip

{\bf Step 2.} {\it Let the weight $\mathfrak a$ satisfy the conditions $(H1)-(H6)$.
Then the inequality (\ref{toprove}) holds.}

We apply Lemma \ref{zeroApprox} with the following set $W$:
the set of all $v$, at which the graph of $u(x)$ traverses from a rectangle,
in which the weight decreases in $x$,
to a rectangle in which the weight increases.
Obviously, the resulting function $\mathfrak b_{\ell}$ satisfy $(H1)-(H7)$.
By Step 1, $I(\mathfrak b_{\ell}, u^*) \le I(\mathfrak b_{\ell}, u)$.
Passing to the limit, we obtain (\ref{toprove}).

\bigskip

{\bf Step 3.} {\it Let the weight $\mathfrak a$ satisfy the conditions $(H1)-(H5)$.
Then the inequality (\ref{toprove}) holds.}

Consider abscissas of nodes of $\mathfrak a$
and ordinates, for which $\mathfrak a$ has constant pieces.
They define a division of the rectangle $[-1, 1] \times [\min u(x), \max u(x)]$
into rectangles in each of which the weight $\mathfrak a$ is monotone in $x$.
However, the number of rectangles can be infinite.
Also, if the graph of $u$ crosses a horizontal boundary of some rectangle,
monotonicity in the $v$-neighborhood of the point of intersection may change.

Consider set $W$ containing all $v$, for which the weight $\mathfrak a$ has constant pieces.
Due to $(H5)$ the set of all  $v \in W$ such that $a(\cdot, v) \not\equiv 0$ has zero measure.

We apply Lemma \ref{zeroApprox} and obtain a sequence of weights $\mathfrak b_{\ell}$.
We claim that each of them has only finite number of monotonicity rectangles.
Indeed, any two vertically adjacent rectangles with different monotonicity
are separated by a stripe of $\frac{2}{\ell}$ width with zero values.

The weight $b_{\ell}$ can change monotonicity along the graph of $u$
either at the points $x = -1 + \frac{2 i}{k}$ or where the graph crosses a stripe of zero values.
Note that only finite number of such crossings can arise since
$\int |u'|$ gains at least $\frac{2}{\ell}$ at any crossing and $u' \in L_1(-1, 1)$.

Thereby, $\mathfrak b_{\ell}$ satisfy $(H1)-(H6)$. By Step 2, $I(\mathfrak b_{\ell}, u^*) \le I(\mathfrak b_{\ell}, u)$.
Passing to the limit, we obtain (\ref{toprove}).

\bigskip
{\bf Step 4.} {\it Let the weight $\mathfrak a$ satisfy the conditions $(H1)-(H3)$.
Then the inequality (\ref{toprove}) holds.}

Suppose that the function $\mathfrak a$ satisfies $(H1)-(H3)$, in particular $I(\mathfrak a, u) < \infty$.

We fix an arbitrary even $k$.
For each $v$ we interpolate $\mathfrak a$ with piecewise linear functions
with nodes $( -1 + \frac{2i}{k}, \mathfrak a(-1 + \frac{2i}{k}, v) )$.
Resulting function $\mathfrak a_k(x, v)$ is continuous, even in $x$
and satisfies (\ref{almostConcave}) by Lemma \ref{piecewiseLinearConcave}.
In addition, $\mathfrak a_k \to \mathfrak a$ when $k \to \infty$,
moreover the convergence is uniform on compact sets.
However, the inequality $\mathfrak a_k(x, u(x)) \le \mathfrak a(x, u(x))$ can be violated,
and thus $\mathfrak a_k$ may be non-admissible for $u$.

Set $\mathfrak c_k := (1 - D_k(\mathfrak a_k, U(\mathfrak a_k))) \mathfrak a_k$, where $D_k$ is defined in (\ref{bigD}).
$D_k(\mathfrak a_k, U(\mathfrak a_k))$ are positive and tend to zero, thus $\mathfrak c_k \to \mathfrak a$ while $k \to \infty$.
We claim that $\mathfrak c_k(x, u(x)) \le \mathfrak a(x, u(x))$.

Indeed, consider some
$x \in [-1 + \frac{2i}{k}, -1 + \frac{2(i + 1)}{k}] =: [x_i, x_{i + 1}]$.
Then $\mathfrak c_k(x, u(x)) \le \max( \mathfrak c_k(x_i, u(x)), \mathfrak c_k(x_{i + 1}, u(x)) )$, because
$\mathfrak c_k$ is piecewise linear in $x$. Moreover,
\begin{multline*}
\mathfrak c_k(x_i, u(x)) = ( 1 - D_k(\mathfrak a_k, U(\mathfrak a_k))) \cdot \mathfrak a(x_i, u(x)) \\
\le \mathfrak a(x_i, u(x)) - \frac{\mathfrak a(x_i, u(x)) - \mathfrak a(x, u(x))}{\mathfrak a(x_i, u(x))} \cdot \mathfrak a(x_i, u(x)) = \mathfrak a(x, u(x)).
\end{multline*}
Similarly $\mathfrak c_k(x_{i + 1}, u(x)) \le \mathfrak a(x, u(x))$.
Thus, $\mathfrak c_k(x, u(x)) \le \mathfrak a(x, u(x))$ for any $x$, and $\mathfrak c_k$ are admissible for $u$.
Thereby the functions $\mathfrak c_k$ satisfy $(H1)-(H4)$.

For a given $k \in \Nat$, we approximate the function $\mathfrak c_k =: \mathfrak c$ with weights satisfying $(H1)-(H5)$.
Consider the auxiliary function $\Lambda(x) = 1 - \abs{x}$, satisfying (\ref{almostConcave}).

Take
$$t(v):=D_k(\mathfrak c, U(\mathfrak c)) \cdot \max\{\tau \ge 0: \forall x \in u^{-1}(v) \quad \tau \Lambda(x) \le \mathfrak c(x, u(x))\}.$$
The function $t$ depends on $k$, but we omit this fact in presentation.

It is clear that the maximum $\tau$ is zero only if $\mathfrak c(\cdot, v) \equiv 0$,
since otherwise the condition $(H3)$ is violated.

Function $t$ may be discontinuous. However, it is easy to see that it is lower semicontinuous.
Next, we take
$$\tilde{t}(v) := \inf_{w \in u([-1, 1])} \{t(w) + |v - w|\}.$$
It is obvious that $\tilde{t} \le t$, and the set of zeros of $t$ and $\tilde{t}$ coincide.

We claim that $\tilde{t}$ is continuous (and even Lipschitz).
Indeed, take some $v_1$.
Then there is an arbitrarily small $\eps > 0$ and $w_1 \in u([-1, 1])$
satisfying $\tilde{t}(v_1) = t(w_1) + |v_1 - w_1| - \eps$.
For every $v_2$, we have $\tilde{t}(v_2) \le t(w_1) + |v_2 - w_1|$.
And thus $\tilde{t}(v_2) - \tilde{t}(v_1) \le |v_1 - v_2| + \eps$.
By the arbitrariness of $v_1$, $v_2$ and $\eps$, the claim follows.

For $\alpha \in [0, 1]$ the function $\mathfrak d_\alpha(x, v) := \mathfrak c(x, v) + \alpha \Lambda(x) \tilde{t}(v)$
is even in $x$, satisfies (\ref{almostConcave}) in concordance with Lemma \ref{maxSumConcave},
and does not exceed $\mathfrak a(x, v)$ due to the construction of the function $\tilde{t}$.
Thus, $\mathfrak d_\alpha$ is an admissible weight.
Also, it is obvious that $\mathfrak d_\alpha$ satisfies $(H1)-(H4)$.

Let us show that there exists a sequence $\alpha_j \searrow 0$
such that $\mathfrak d_{\alpha_j}(\cdot, v)$ has no segments of constant values,
unless $\mathfrak d_{\alpha_j}(\cdot, v) \equiv 0$ or $v$ belongs to a zero measure set.
We introduce the set of $\alpha$, which are ``bad'' on $[x_i, x_{i + 1}]$:
\begin{multline*}
A_i := \big \{\alpha \in [0, 1]: \\
meas \{v \in [\min u, \max u]: \frac{\mathfrak c(x_{i + 1}, v) - \mathfrak c(x_i, v))}{\frac{2}{k}} + \alpha \chi_i \tilde{t} (v) = 0 \} > 0 \big \},
\end{multline*}
where $\chi_i = 1$ if $[x_i, x_{i + 1}] \subset [0, 1]$, and $\chi_i = -1$ if $[x_i, x_{i + 1}] \subset [-1, 0]$.

Consider the following function
$$
\begin{aligned}
h_i(v) = & \frac{\mathfrak c(x_{i + 1}, v) - \mathfrak c(x_i, v)}{\tilde{t} (v)} & \text{ if } \tilde{t} (v) \neq 0 & \\
h_i(v) = & 0 & \text{ if } \tilde{t} (v) = 0 &.
\end{aligned}
$$
We have $\card(A_i) = \card(\{ \alpha \in [0, 1]: meas \{ v \in [\min u, \max u]: h_i(v) \pm \frac{2}{k} \alpha = 0 \} > 0 \}).$
Then $\card(A_i) \le \aleph_0$, and $\card(\cup_i A_i) \le \aleph_0$.
Thus, there exists a sequence of weights $\mathfrak d_{\alpha_j} \searrow \mathfrak c$, satisfying $(H1)-(H5)$.
By Step 3, $I(\mathfrak d_{\alpha_j}, u^*) \le I(\mathfrak d_{\alpha_j}, u)$.
Passing to the limit, we get $I(\mathfrak c, u^*) \le I(\mathfrak c, u)$.

Further, for $x \in [-1, 1]$ we have
\begin{equation}
\label{step4Conv}
F\big(u(x), \mathfrak c_k(x, u(x)) |u'(x)|\big) \to F\big(u(x), \mathfrak a(x, u(x)) |u'(x)|\big)
\end{equation}
as $k \to \infty$.
Moreover, $F\big(u(x), \mathfrak a(x, u(x)) |u'(x)|\big)$ is an integrable majorant
for the left-hand side in (\ref{step4Conv}).
By the Lebesgue theorem, we have $I(\mathfrak c_k, u) \to I(\mathfrak a, u)$.
Since $I(\mathfrak c_k, u^*) \le I(\mathfrak c_k, u)$, Lemma \ref{uplift} proves the inequality (\ref{toprove}).

\bigskip
{\bf Step 5.} {\it Let the weight $\mathfrak a$ satisfy only the condition $(H1)$.
Then the inequality (\ref{toprove}) holds.}

We approximate $\mathfrak a$ by weights satisfying $(H1)-(H2)$.
To do this we apply Lemma \ref{zeroApprox} with $W = \{ v \in \Real_+: \mathfrak a(\cdot, v) \equiv 0 \}$.
Let us introduce the notation $$Z_{\mathfrak a}(v) := \{ x \in [-1, 1]: \mathfrak a(x, v) = 0 \}.$$
Note that the sets $Z_{\mathfrak b_{\ell}}(v)$ are either $Z_{\mathfrak a}(v)$ or $[-1, 1]$.

Let us show that $\mathfrak b_{\ell}$ satisfies $(H2)$.
Indeed, otherwise there is a sequence $v_m$, for which
$m < \card(Z_{\mathfrak b_{\ell}})(v_m) < \infty$.
After passing to a subsequence, we have $v_m \to v_0$.
Part 2 of Lemma \ref{periodicity} implies that the set $Z_{\mathfrak b_{\ell}}(v_m) = Z_{\mathfrak a}(v_m)$
is periodic with period less or equal to $\frac{2}{m - 1}$.
Take some $x \in [-1, 1]$. For each $m$ there exists $x_m$ such that
$\abs{x - x_m} \le \frac{1}{m - 1}$ and $\mathfrak a(x_m, v_m) = 0$.
But $\mathfrak a(x_m, v_m) \to \mathfrak a(x, v_0)$.
Therefore, $\mathfrak a(x, v_0) = 0$.

Thus $Z_{\mathfrak a}(v_0) = [-1, 1]$.
But this means that for every $v$ such that $\abs{v - v_0} \le \frac{1}{\ell}$,
we have $\mathfrak b_{\ell}(\cdot, v) \equiv 0$,
which contradicts $\card(Z_{\mathfrak b_{\ell}})(v_m) < \infty$.

Now we fix $\ell \in \Nat$ and denote $\mathfrak b_{\ell} =: \mathfrak b$.
Let us approximate the function $\mathfrak b$ with weights satisfying $(H1)-(H3)$.
It follows from $(H2)$, that there exists a set $T \subset [-1, 1]$
consisting of a finite number of elements, such that
if $x \not\in T$ and $\mathfrak b(x, v) = 0$ for some $v$, then $\mathfrak b(\cdot, v) \equiv 0$.

We use Lemma \ref{zeroApprox} with $W = u(T) \cup u^*(T)$.
The weights $\mathfrak c_j$, given by the Lemma, satisfy $(H1)-(H2)$,
since they are just $\mathfrak b$ multiplied by a factor less than one, which depends only on $v$.

For any $k$ sufficiently large, there exists $j = j(k)$ such that
$$u\Big( \Big\{ x \in [-1, 1]: dist(x, T) \le \frac{4}{k} \Big\} \Big) \subset \Big\{ v \in \Real_+: dist(v, u(T)) \le \frac{1}{2j} \Big\},$$
and $j(k) \to \infty$ as $k \to \infty$ by continuity of $u$.
This implies that $\min\limits_{dist(x, u^{-1}(v)) \le \frac{2}{k}} c_j(x, v) > 0$
for all $v \in U(c_j)$.
Moreover, for $v \in U(c_j)$ we have
$$
\frac{\max\limits_{\abs{x_i - x_{i + 1}} \le \frac{2}{k}} \abs{\mathfrak c_j(x_i, v) - \mathfrak c_j(x_{i + 1}, v)}}
{\min\limits_{\dist(x, u^{-1}(v)) \le \frac{2}{k}} \mathfrak c_j(x, v)}
=\frac{\max\limits_{\abs{x_i - x_{i + 1}} \le \frac{2}{k}} \abs{\mathfrak b(x_i, v) - \mathfrak b(x_{i + 1}, v)}}
{\min\limits_{\dist(x, u^{-1}(v)) \le \frac{2}{k}} \mathfrak b(x, v)}.
$$
Note, that the denominator of the right-hand side is separated from zero for $v \in U(\mathfrak c_j)$.
Thus, $D_k(\mathfrak c_j, U(\mathfrak c_j))$ is bounded.

Since $D_k$ does not change if we multiply the first argument by a positive factor independent of $x$,
and $U(\mathfrak c_j) \nearrow U(\mathfrak b)$, we have
$$D_k(\mathfrak c_j, U(\mathfrak c_j)) = D_k(\mathfrak b, U(\mathfrak c_j)) \le D_k(\mathfrak b, U(\mathfrak b)) \to 0$$
as $k \to \infty$.

Thus, the weights $\mathfrak c_{j(k)}$ satisfy $(H1)-(H3)$.
By Step 4, $I(\mathfrak c_{j(k)}, u^*) \le I(\mathfrak c_{j(k)}, u)$.
Passing to the limit, we get $I(\mathfrak b_{\ell}, u^*) \le I(\mathfrak b_{\ell}, u)$,
and consequently the inequality (\ref{toprove}).

Thus, Theorem \ref{mainThm} is proved.
\hfill $\square$

\medskip

Now we consider the case where the function $u$ satisfies the additional condition $u(-1) = 0$.
\begin{thm}
Suppose that $F \in \mathfrak{F}$, the function $u \in \W(-1, 1)$ is nonnegative, $u(-1) = 0$,
and the weight function $\mathfrak a: [-1, 1] \times \Real_+ \to \Real_+$ is continuous
and satisfies $(\ref{almostConcave})$.
Then the inequality $(\ref{toprove})$ holds.
\end{thm}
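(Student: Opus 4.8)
The plan is to repeat the five-step argument of Theorem~\ref{mainThm}, but with the evenness hypothesis on $\mathfrak a$ removed throughout and with the value of $u$ at $-1$ kept equal to $0$ in every approximation. As usual one may assume $I(\mathfrak a,u)<\infty$, since otherwise (\ref{toprove}) is trivial. The base case — inequality (\ref{toprove}) for piecewise linear $u$ with $u(-1)=0$ — is precisely Remark~\ref{landesLinear}: in the proof of Theorem~\ref{linth} the value $u(-1)=0$ is the minimum of $u$, so $u(-1)<u_0$ for every $u_0\in U$, only the first two rows of the table arise, and these need only Part~1 of Lemma~\ref{weightSum}, which does not use evenness. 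Passing from piecewise linear to Lipschitz functions pinned at $-1$ is Lemma~\ref{uplift} together with the approximation of Lipschitz functions by piecewise linear ones from \cite[Section~6.6]{Gariepy}, which can be performed keeping the value at $-1$ fixed.

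Next I would re-run Steps~1--5 of Theorem~\ref{mainThm} with ``even in $x$'' deleted from $(H1)$--$(H7)$ (so that ``admissible'' now means: continuous, satisfying (\ref{almostConcave}), with $I(\mathfrak a,u)<\infty$), using Part~1 of Lemmas~\ref{weightSum} and~\ref{periodicity} in place of their even counterparts. The auxiliary weight $\Lambda(x)=1-|x|$ of Step~4 still satisfies (\ref{almostConcave}), and Lemmas~\ref{maxSumConcave}, \ref{piecewiseLinearConcave} and~\ref{zeroApprox} are proved without evenness, so these carry over unchanged. The one genuinely new point in Step~1 occurs at the pinned end, in case $\mathfrak a$ increases in $x$ near $-1$: on the leftmost piece the construction of Lemma~\ref{Wapprox} (in the form of the Remark following it) keeps the \emph{right} endpoint fixed, so the Lipschitz approximants $u_n$ need not vanish at $-1$. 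However $\norm{u_n-u}_\infty\to 0$, hence $u_n(-1)\to u(-1)=0$, and replacing $u_n$ on a short interval $[-1,-1+\eta_n]$ by the affine function joining $0$ to $u_n(-1+\eta_n)$, with $\eta_n\downarrow 0$ chosen slowly enough relative to the (fixed-$n$) Lipschitz constant and to $|u_n(-1)|$, repairs this without disturbing either $\W$-convergence or the convergence of the integrals; after this correction Remark~\ref{landesLinear} and Lemma~\ref{uplift} apply. If instead $\mathfrak a$ is non-increasing near $-1$ one pins at $-1$ directly and no correction is needed.

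The hardest part will be the analogue of Step~5, where a weight satisfying only $(H1)$ is approximated by weights additionally satisfying $(H2)$, i.e.\ having a uniform bound on the number of zeros of $\mathfrak a(\cdot,v)$. In the even case this rested on Part~2 of Lemma~\ref{periodicity}, which makes $\mathfrak a(\cdot,v)$ periodic on all of $[-1,1]$, so that a level $v_0$ with arbitrarily many nearby zeros must have $\mathfrak a(\cdot,v_0)\equiv 0$ and hence lie in the set $W$ that has been zeroed out. Part~1 of Lemma~\ref{periodicity} only yields periodicity of the zero set on $[x_0,1]$, where $x_0$ is the leftmost zero, and leaves open the behaviour on $[-1,x_0)$. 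Here I expect one must exploit $u(-1)=0$ together with Proposition~\ref{levelDerivative}: zeros of the weight that accumulate on an interval lying off (a neighbourhood of) the graph of $u$ — equivalently, off $u^{-1}(v)$ — do not affect $I(\mathfrak a,u)$ or $I(\mathfrak a,u^*)$, so one may enlarge $W$ to absorb the corresponding levels before invoking Lemma~\ref{zeroApprox}.

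Granting this, Steps~2--4 carry over essentially verbatim (the zero-management of Step~3, the interpolation and $\Lambda$-perturbation of Step~4), and passing to the limit in the approximating weights exactly as in Steps~2--5 of Theorem~\ref{mainThm} — using Lemma~\ref{zeroApprox} and dominated convergence on the side of $u$, and weak lower semicontinuity (Lemma~\ref{lowersemi}) on the side of $u^*$, combined through Lemma~\ref{uplift} — yields (\ref{toprove}).
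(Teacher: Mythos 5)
Your overall strategy --- rerun the proof of Theorem \ref{mainThm} with evenness deleted and the pinning $u(-1)=0$ preserved, using Remark \ref{landesLinear} for the piecewise linear base case --- is the same as the paper's, but you miss the one new device the paper introduces for this theorem, and it is exactly at that point that your substitute argument is not justified. The paper replaces $(H1)$ by $(H1')$ (no evenness) and strengthens $(H7)$ to $(H7')$: $\mathfrak a(\cdot,v)\equiv 0$ for all $v$ in a neighbourhood of $0$. With $(H7')$, Step 1 re-pins the approximants by replacing $u_n$ near $x=-1$ with a linear piece satisfying $u_n(-1)=0$, and this costs nothing: near $x=-1$ both $u$ and $u_n$ (and the linear correction) take values in the $v$-neighbourhood of $0=u(-1)$ where the weight vanishes, so the integrand there is $F(\cdot,0)=0$ no matter how steep the correction is; the extra hypothesis $(H7')$ is then disposed of in Step 2 simply by adding the level $v=0$ to the set $W$ fed into Lemma \ref{zeroApprox}. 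Your affine patch on $[-1,-1+\eta_n]$ tries to achieve the same without first killing the weight near $v=0$, and the claim that $\eta_n$ can be ``chosen slowly enough'' is where it breaks: the patch contributes to the functional at most $\eta_n\max_v F\big(v,\,M(u_n(-1)/\eta_n+L_n)\big)$, with $M$ a bound for $\mathfrak a$ and $L_n$ the Lipschitz constant of $u_n$ near $-1$, which is unbounded in $n$. Since $F$ is only convex and increasing, with no growth restriction, forcing this to vanish requires $u_n(-1)$ to be quantitatively small compared with $1/\max_v F(v,\,2ML_n)$ --- a rate the construction of Lemma \ref{Wapprox} does not give you (one would have to go back and retune the covering sets $A_h$ for each $n$). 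So, as written, this step is a genuine gap; the $(H7')$/Lemma \ref{zeroApprox} trick removes the difficulty at the root, and makes your case distinction on the monotonicity of $\mathfrak a$ near $-1$ unnecessary.

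The second gap you acknowledge yourself: for the analogue of Step 5 you observe, correctly, that the verification of $(H2)$ for $\mathfrak b_\ell$ in Theorem \ref{mainThm} invokes Part 2 of Lemma \ref{periodicity} and hence evenness, but you only ``expect'' that enlarging $W$ with the help of Proposition \ref{levelDerivative} repairs it, without giving an argument. This is not how the paper proceeds either: after its modified Steps 1 and 2 it declares the remaining steps unchanged. Whatever one thinks of that brevity, your proposal leaves this part unproved, so it cannot be accepted as a complete proof as it stands.
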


\begin{proof}
We follow the proof of Theorem \ref{mainThm},
but we change $(H1)$ and $(H7)$ to the following conditions:

\bigskip
\noindent
$(H1')$ $\mathfrak a(x, v)$ satisfies (\ref{almostConcave}), and $I(\mathfrak a, u) < \infty$.

\bigskip
\noindent
$(H7')$ The assumption $(H7)$ is satisfied and $\mathfrak a(\cdot, v) \equiv 0$ in some $v$-neighborhood of zero.

\bigskip
{\bf Step 1.} {\it Let $u \in \W(-1, 1)$, $u(-1) = 0$ and let the weight $\mathfrak a$ satisfy the conditions $(H1'), (H2)-(H6), (H7')$.
Then the inequality (\ref{toprove}) holds.}

To prove this we approximate the function $u$ in the same way as in the first step of Theorem \ref{mainThm} proof,
changing $u$ in a neighborhood of $x = -1$ to a linear function with $u_n(-1) = 0$ preserved.

\bigskip
{\bf Step 2.} {\it Let the weight $\mathfrak a$ satisfy conditions $(H1'), (H2)-(H6)$.
Then the inequality (\ref{toprove}) holds.}

To prove this we add zero to the set $W$ from the second step of Theorem \ref{mainThm} proof,
and repeat the rest of the proof.

\medskip

Further steps are unchanged.
\end{proof}

\section{Appendix. The case of symmetric rearrangement}

\subsection{Necessary conditions for the weight}

\begin{lm}
If the inequality $(\ref{toproveSymm})$ holds for all $F \in \mathfrak{F}$
and all piecewise linear $u$, then
the weight $\mathfrak a$ satisfies
\begin{equation}
\label{almostConvex}
\forall s, t \in [-1, 1], \forall v \in \Real_+ \quad
\mathfrak a( s, v ) + \mathfrak a( t, v ) \ge \mathfrak a\Big( \frac{ s - t }{2}, v \Big) + \mathfrak a\Big( \frac{ t - s }{2}, v \Big).
\end{equation}
\end{lm}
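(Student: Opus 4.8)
The plan is to prove the contrapositive, following the proof of part~2 of Theorem~\ref{necessary}: assuming $(\ref{almostConvex})$ fails, I will produce a single $F\in\mathfrak{F}$ and a single piecewise linear $u\in\Wf(-1,1)$ with $I(\mathfrak a,\overline u)>I(\mathfrak a,u)$. First I would carry out the usual reductions. Since $\mathfrak a$ is continuous, $(\ref{almostConvex})$ with $v=0$ follows from $(\ref{almostConvex})$ for $v>0$, so the failure occurs at some $\bar v>0$. On the diagonal $s=t$ the inequality $(\ref{almostConvex})$ reads $\mathfrak a(s,v)\ge\mathfrak a(0,v)$, and a strict violation there, being an open condition, persists for nearby $t>s$; likewise a violation at a boundary point $s=-1$ or $t=1$ persists for nearby interior points. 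Hence we may assume the violation occurs at some $-1<s<t<1$, $\bar v>0$. Passing once more to a neighbourhood and using uniform continuity of $\mathfrak a$, I would fix $\eta>0$ with $\eta<1-\tfrac{t-s}{2}$ and $\eps,\delta>0$ with $2\eps<t-s$ such that for every $z\in[0,\eps]$
\[
\mathfrak a(s+z,\bar v+z)+\mathfrak a(t-z,\bar v+z)+\delta<\mathfrak a\Big(\tfrac{s-t}{2}+z,\bar v+z\Big)+\mathfrak a\Big(\tfrac{t-s}{2}-z,\bar v+z\Big);
\]
note that applying $(\ref{almostConvex})$ to the pair $(s+z,\,t-z)$ produces precisely these four arguments, since $\frac{(s+z)-(t-z)}{2}=\frac{s-t}{2}+z$ and $\frac{(t-z)-(s+z)}{2}=\frac{t-s}{2}-z$.

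Next I would take as the test function the piecewise linear $u\in\Wf(-1,1)$ which equals the pedestal $\bar v$ on $[-1+\eta,1-\eta]$, rises linearly from $0$ to $\bar v$ on $[-1,-1+\eta]$, falls linearly from $\bar v$ to $0$ on $[1-\eta,1]$, and carries in addition a trapezoidal bump of height $\eps$ over $[s,t]$: slope $1$ on $[s,s+\eps]$, the constant $\bar v+\eps$ on $[s+\eps,t-\eps]$, slope $-1$ on $[t-\eps,t]$. A direct computation of level sets shows that its symmetric rearrangement $\overline u$ is again the pedestal $\bar v$ on $[-1+\eta,1-\eta]$ with the very same end ramps on $[-1,-1+\eta]$ and $[1-\eta,1]$, but now carrying a symmetric trapezoidal bump of height $\eps$ over $[-\tfrac{t-s}{2},\tfrac{t-s}{2}]$; the condition $\eta<1-\tfrac{t-s}{2}$ ensures this bump does not reach the end ramps. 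In particular $u$ and $\overline u$, together with their derivatives, coincide on $[-1,-1+\eta]\cup[1-\eta,1]$.

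Finally I would choose $F(v,p)=p$, which belongs to $\mathfrak{F}$. The contributions of the two end ramps to $I(\mathfrak a,u)$ and to $I(\mathfrak a,\overline u)$ are then literally equal and cancel in the difference. On $[-1+\eta,1-\eta]$ only the bump ramps contribute ($u'=(\overline u)'=0$ elsewhere), and every slope occurring on them has modulus $1$, so changing variables to the level $v=\bar v+z$ along each ramp (with unit Jacobian) gives
\[
I(\mathfrak a,u)-I(\mathfrak a,\overline u)=\int\limits_0^\eps\Big(\mathfrak a(s+z,\bar v+z)+\mathfrak a(t-z,\bar v+z)-\mathfrak a\big(\tfrac{t-s}{2}-z,\bar v+z\big)-\mathfrak a\big(\tfrac{s-t}{2}+z,\bar v+z\big)\Big)\,dz .
\]
By the displayed inequality the integrand is less than $-\delta$, hence $I(\mathfrak a,u)-I(\mathfrak a,\overline u)<-\delta\eps<0$; that is, $I(\mathfrak a,\overline u)>I(\mathfrak a,u)$, contradicting $(\ref{toproveSymm})$.

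I expect the only delicate step to be the explicit description of $\overline u$ and the verification that it agrees with $u$ (and with $u'$) on the two end ramps; this comes down to choosing $\eta$ small enough that the rearranged bump over $[-\tfrac{t-s}{2},\tfrac{t-s}{2}]$ remains inside $(-1+\eta,1-\eta)$, together with the harmless preliminary reductions $-1<s<t<1$ and $2\eps<t-s$. Everything else is routine. Note a simplification compared with Theorem~\ref{necessary}: since the symmetric rearrangement preserves $|u'|$ along levels — no level has two preimages with the slope halved, as happens for the monotone rearrangement — the affine $F(v,p)=p$ already yields a strict violation, and the device of taking $F(v,p)=p^\alpha$ with $\alpha$ slightly greater than $1$ is not needed here.
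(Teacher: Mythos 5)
Your proof is correct and is essentially the paper's own argument: the paper also proceeds by contradiction with the same trapezoidal bump (it simply reuses the function (\ref{parLinU}) with boundary value $\bar v$, without your end ramps, and takes $F(v,p)=p+\gamma p^2$ instead of the linear $F(v,p)=p$, which indeed already suffices), and the level-by-level comparison of the four ramp integrals is identical. One tiny repair to your parameter choice: require $\eta<\min(1+s,\,1-t)$ (which automatically gives $\eta<1-\frac{t-s}{2}$), since your stated condition only guarantees that the rearranged bump, not the original bump over $[s,t]$, stays inside the pedestal $[-1+\eta,1-\eta]$.
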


\begin{proof}
Assume that the inequality (\ref{almostConvex}) is not satisfied.
Then there are $-1 \le s < t \le 1$, $\eps, \delta > 0$ ($2 \eps < t - s$) and $\bar{v} \in \Real_+$,
such that for any $0 \le z \le \eps$ and any $\bar{v} \le v \le \bar{v} + \eps$ the following holds:
\begin{equation}
\label{notConvex}
\mathfrak a(s + z, v + z) + \mathfrak a(t - z, v + z) + 2 \delta < \mathfrak a\Big(\frac{s - t}{2} + z, v + z \Big) + \mathfrak a\Big(\frac{t - s}{2} - z, v + z \Big).
\end{equation}

Consider the function $u$ defined in (\ref{parLinU}). We have
$$
\left\{
\begin{aligned}
\bar{u}(x) &= \bar{v}, & x \in &[-1, \frac{s - t}{2}] \cup [\frac{t - s}{2}, 1]\\
\bar{u}(x) &= \bar{v} + x - \frac{s - t}{2}, & x \in &[\frac{s - t}{2}, \frac{s - t}{2} + \eps]\\
\bar{u}(x) &= \bar{v} + \eps, & x \in &[\frac{s - t}{2} + \eps, \frac{t - s}{2} - \eps]\\
\bar{u}(x) &= \bar{v} + \frac{t - s}{2} - x, & x \in &[\frac{t - s}{2} - \eps, \frac{t - s}{2}].
\end{aligned}
\right.
$$

Hence we obtain
\begin{multline*}
0 \le I(\mathfrak a, u) - I(\mathfrak a, \overline{u}) \\
=\int_0^{\eps} F\big( u(s + z), \frac{\mathfrak a( s + z, u(s + z) )}{\eps} \big) dz + \int_0^{\eps} F\big( u(t - z), \frac{\mathfrak a(t - z, u(t - z))}{\eps} \big) dz \\
-\int_0^{\eps} F\big( \bar{u}(\frac{s - t}{2} + z), \frac{\mathfrak a( \frac{s - t}{2} + z, \bar{u}(\frac{s - t}{2} + z) )}{\eps} \big) dz \\
-\int_0^{\eps} F\big( \bar{u}(\frac{t - s}{2} - z), \frac{\mathfrak a( \frac{t - s}{2} - z, \bar{u}(\frac{t - s}{2} - z) )}{\eps} \big) dz =: J.
\end{multline*}

Take $F(v, p) := f(p) := p + \gamma p^2$, where $\gamma > 0$.
Then
\begin{multline*}
J = \int_0^{\eps} \big( f(\frac{\mathfrak a(s + z, \bar{v} + z)}{\eps}) + f(\frac{\mathfrak a(t - z, \bar{v} + z)}{\eps}) \\
- f(\frac{\mathfrak a(\frac{s - t}{2} + z, \bar{v} + z)}{\eps}) - f(\frac{\mathfrak a(\frac{t - s}{2} - z, \bar{v} + z)}{\eps}) \big) dz.
\end{multline*}

We define $A$ by relation (\ref{weightMax}).
If we take $\gamma := \frac{\delta / \eps}{(A / \eps)^2} > 0$,
then for $p \le \frac{A}{\eps}$ we have $p \le f( p ) \le p + \frac{\delta}{\eps}$, and
\begin{equation*}
J \le \frac{1}{\eps} \int_0^{\eps} \big( \mathfrak a(s + z, \bar{v} + z) + \mathfrak a(t - z, \bar{v} + z) + 2 \delta
- \mathfrak a(\frac{s - t}{2} + z, \bar{v} + z) - \mathfrak a(\frac{t - s}{2} - z, \bar{v} + z) \big) dz < 0
\end{equation*}
(the last inequality follows from (\ref{notConvex})).

Thus, we get a contradiction, hence (\ref{almostConvex}) holds.
\end{proof}

\begin{lm}
Let relation $(\ref{almostConvex})$ hold for a function $\mathfrak a \in C([-1, 1] \times \Real_+)$.
Then $\mathfrak a$ is even and convex with respect to the first argument.
\end{lm}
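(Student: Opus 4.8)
Since (\ref{almostConvex}) is imposed for each fixed $v$ separately, I would fix $v\in\Real_+$ and abbreviate $a(x):=\mathfrak a(x,v)$; the goal is to show $a$ is even and convex on $[-1,1]$. First I would isolate an elementary fact to be used twice: \emph{a continuous even function $b$ on $[-1,1]$ satisfying (\ref{almostConvex}) is convex}. Indeed, taking $s=p$, $t=-q$ in (\ref{almostConvex}) and using $b(-q)=b(q)$ and $b\big(-\frac{p+q}{2}\big)=b\big(\frac{p+q}{2}\big)$ gives $b(p)+b(q)\ge 2b\big(\frac{p+q}{2}\big)$ for all $p,q\in[-1,1]$, so $b$ is midpoint convex, hence (being continuous) convex.

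Next I would reduce to the even case. The function $a(-\cdot)$ again satisfies (\ref{almostConvex}) --- it is just (\ref{almostConvex}) for $a$ written at the points $-s,-t$ --- so the even part $E:=\frac{1}{2}\big(a(\cdot)+a(-\cdot)\big)$, being a positive combination of two functions obeying the additive inequality (\ref{almostConvex}), satisfies it too; since $E$ is even, the fact above shows $E$ is convex, hence locally Lipschitz on $(-1,1)$. The real work is to prove that the odd part $O:=\frac12\big(a(\cdot)-a(-\cdot)\big)$ vanishes. Substituting $a=E+O$ into (\ref{almostConvex}) at $(s,t)$ and at $(-s,-t)$, the odd terms on the right-hand side cancel and one is left with $\pm\big(O(s)+O(t)\big)\le E(s)+E(t)-2E\big(\frac{s-t}{2}\big)$; choosing $s=q$, $t=-p$ with $0\le p\le q\le1$ yields the key estimate
$$\big|O(q)-O(p)\big|\ \le\ E(p)+E(q)-2E\Big(\tfrac{p+q}{2}\Big).$$

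To finish, I would fix $X\in(0,1)$ and a partition $0=x_0<x_1<\dots<x_n=X$. Summing the key estimate over consecutive nodes and using $O(0)=0$ gives $|O(X)|\le\sum_{i=0}^{n-1}\big[E(x_i)+E(x_{i+1})-2E\big(\frac{x_i+x_{i+1}}{2}\big)\big]$. Each summand equals $\frac{h_i}{2}\big(\sigma_i^+-\sigma_i^-\big)$, where $h_i=x_{i+1}-x_i$ and $\sigma_i^-,\sigma_i^+$ are the slopes of $E$ on the left and right halves of $[x_i,x_{i+1}]$; by convexity of $E$ the numbers $\sigma_0^-\le\sigma_0^+\le\sigma_1^-\le\sigma_1^+\le\dots$ increase and are bounded in absolute value by the Lipschitz constant $L$ of $E$ on $[0,X]$, whence the whole sum is at most $\frac{\max_i h_i}{2}\big(\sigma_{n-1}^+-\sigma_0^-\big)\le L\max_i h_i$. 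Letting the mesh tend to $0$ forces $O(X)=0$; continuity then gives $O(1)=0$ and oddness gives $O\equiv0$ on $[-1,1]$. Thus $a$ is even, and a second application of the elementary fact to $a$ itself shows $a$ is convex. I expect the only genuine obstacle to be precisely this evenness step: the heuristic ``$O'\equiv0$'' is immediate for smooth $a$, and the role of the argument above is to make it rigorous for a merely continuous weight by first extracting convexity --- hence Lipschitz control --- of the even part $E$, which is exactly what makes the telescoping sum collapse.
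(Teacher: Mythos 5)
Your proof is correct, but it follows a genuinely different route from the paper's. The paper first treats the case $\mathfrak a(\cdot,v)\in C^1$: for fixed $s$ the nonnegative function $b(x)=\mathfrak a(s,v)+\mathfrak a(x,v)-\mathfrak a(\frac{s-x}{2},v)-\mathfrak a(\frac{x-s}{2},v)$ vanishes at $x=-s$, so the first-order condition $b'(-s)=0$ forces $\mathfrak a'_x(s,v)=-\mathfrak a'_x(-s,v)$, i.e.\ evenness; the merely continuous case is then handled by mollifying with an even kernel (after extending $\mathfrak a$ constantly beyond $[-1,1]$), checking that $\mathfrak a_\rho$ inherits (\ref{almostConvex}), and passing to the limit; finally, evenness turns (\ref{almostConvex}) with $t\mapsto -t$ into midpoint convexity, whence convexity by continuity. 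You instead split $a=E+O$ into even and odd parts, observe that $E$ inherits (\ref{almostConvex}) (the inequality is linear in $a$ and $a(-\cdot)$ satisfies it as well), obtain convexity of $E$ by the same midpoint trick, and then eliminate $O$ through the estimate $\abs{O(q)-O(p)}\le E(p)+E(q)-2E\big(\frac{p+q}{2}\big)$ combined with a telescoping sum over fine partitions, the local Lipschitz bound for the convex function $E$ making the sum vanish in the mesh limit. Your argument buys a fully elementary, smoothing-free proof that never leaves $[-1,1]$ (the paper's constant extension and the validity of (\ref{almostConvex}) for the mollified weight near the endpoints are glossed over there), at the price of a longer computation; the paper's argument is shorter and leans on the standard mollification machinery. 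Both proofs share the final step deducing convexity from evenness, and your handling of the endpoint ($X<1$, then continuity at $X=1$) correctly avoids the failure of the Lipschitz bound at the boundary.
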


\begin{proof}
Assume first that $\mathfrak a(\cdot, v) \in C^1([-1, 1])$ for each $v$.
We fix arbitrary $s \in [-1, 1]$ and $v \in \Real_+$ and consider the function
$$b(x) := \mathfrak a( s, v ) + \mathfrak a( x, v ) - \mathfrak a( \frac{ s - x }{2}, v ) - \mathfrak a( \frac{ x - s }{2}, v ) \ge 0.$$
$x = -s$ is the minimum point of $b$, since $b(-s) = 0$.
Hence,
$$b'(-s) = \mathfrak a'_x( -s, v ) + \frac{1}{2} \mathfrak a'_x( s, v ) - \frac{1}{2} \mathfrak a'_x( -s, v ) = 0,$$
that is $\mathfrak a'_x( s, v ) = -\mathfrak a'_x( -s, v )$. Thus, the function $\mathfrak a(\cdot, v)$ is even.

Now consider the case of a continuous $\mathfrak a$.

Define $\mathfrak a( x, v ) := \mathfrak a( -1, v )$ for $x < -1$ and $\mathfrak a( x, v ) := \mathfrak a( 1, v )$ for $x > 1$.
Consider the mollification of the function:
$$\mathfrak a_\rho( x, v ) = \int_\Real \omega_\rho ( z ) \mathfrak a( x - z, v ) dz = \int_\Real \omega_\rho ( z ) \mathfrak a( x + z, v ) dz,$$
where $\omega_\rho(z)$ is a smoothing kernel with radius $\rho$.
Then
\begin{multline*}
\mathfrak a_\rho( s, v ) + \mathfrak a_\rho( t, v ) - \mathfrak a_\rho( \frac{ s - t }{2}, v ) - \mathfrak a_\rho( \frac{ t - s }{2}, v ) =
\\ \int_\Real \omega_\rho ( z ) \big( \mathfrak a( s - z, v ) + \mathfrak a( t + z, v ) - \mathfrak a( \frac{ s - t }{2} - z, v ) - \mathfrak a( \frac{ t - s }{2} + z, v ) \big) dz \ge 0.
\end{multline*}
So $\mathfrak a_\rho(\cdot, v)$ is even.
Passing to the limit with $\rho \to 0$, we obtain that $\mathfrak a(\cdot, v)$ is even.

Finally, for any $s$, $t$ and $v$, we have
$$\mathfrak a( s, v ) + \mathfrak a( t, v ) = \mathfrak a( s, v ) + \mathfrak a( -t, v ) \ge 2 \mathfrak a\big( \frac{ s + t }{2}, v \big).$$
\end{proof}

\subsection{The proof of the inequality (\ref{toproveSymm})}
\label{sobolevSymm}

\begin{thm}
\label{symmThm}
Suppose that $F \in \mathfrak{F}$, the function $u \in \W(-1, 1)$ is non-negative,
and the continuous weight function $\mathfrak a: [-1, 1] \times \Real_+ \to \Real_+$
is even and convex with respect to the first argument.
Then the inequality $(\ref{toproveSymm})$ holds.
\end{thm}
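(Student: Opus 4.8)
The plan is to run the argument of Sections~4--7 once more, now for the symmetric rearrangement, with the r\^ole of condition $(\ref{almostConcave})$ played by evenness and convexity in $x$ (equivalently, by $(\ref{almostConvex})$). As in Section~4 I may assume $F(\cdot,0)\equiv 0$: replacing $F(v,p)$ by $F(v,p)-F(v,0)$ changes $I(\mathfrak a,u)$ and $I(\mathfrak a,\overline u)$ by the \emph{same} amount $\int_{-1}^{1}F(u,0)=\int_{-1}^{1}F(\overline u,0)$, since $u$ and $\overline u$ are equimeasurable; I may also assume $I(\mathfrak a,u)<\infty$.

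\smallskip\noindent\textbf{Step 1: piecewise linear $u$.} For nonnegative piecewise linear $u$ with $u(-1)=u(1)=0$ I imitate the proof of Theorem~\ref{linth}. Since $u$ vanishes at both ends, for every level $u_{0}$ in the open part $U=\bigcup_{j}G_{j}$ of the range the equation $u(y)=u_{0}$ has an \emph{even} number $m_{j}$ of roots $y_{1}^{j}<\dots<y_{m_{j}}^{j}$, while $\overline u(\bar y)=u_{0}$ has exactly the two roots $\bar y=\pm\frac12\sum_{k}(-1)^{k}y_{k}^{j}(u_{0})$. Writing $I(\mathfrak a,u)$ and $I(\mathfrak a,\overline u)$ as integrals over the $G_{j}$ in the level variable, applying Jensen's inequality to the convex function $F(v,\cdot)$, and using evenness of $\mathfrak a$ in $x$ (which makes the two $\overline u$-preimages contribute equally), everything reduces to the pointwise inequality
\begin{equation*}
\sum_{k=1}^{m}\mathfrak a(y_{k},v)\ \ge\ 2\,\mathfrak a\!\Big(\tfrac12\sum_{k=1}^{m}(-1)^{k}y_{k},\,v\Big),\qquad -1\le y_{1}\le\dots\le y_{m}\le 1,\ m\ \text{even},
\end{equation*}
which is the symmetric analogue of Lemma~\ref{weightSum}. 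To prove it I pair $y_{k}$ with $y_{m+1-k}$ and use convexity together with evenness of $\mathfrak a(\cdot,v)$ to get $\mathfrak a(y_{k},v)+\mathfrak a(y_{m+1-k},v)\ge 2\mathfrak a(\nu_{k},v)$ with $\nu_{k}:=\tfrac12(y_{m+1-k}-y_{k})\in[0,1]$; since $\nu_{1}=\tfrac12(y_{m}-y_{1})\ge \tfrac12\sum_{k}(-1)^{k}y_{k}=:S$ and both $S,\nu_{1}$ lie in $[0,1]$, where $\mathfrak a(\cdot,v)$ is nonnegative and nondecreasing, I obtain $\sum_{k}\mathfrak a(y_{k},v)\ge 2\sum_{k}\mathfrak a(\nu_{k},v)\ge 2\mathfrak a(\nu_{1},v)\ge 2\mathfrak a(S,v)$.

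\smallskip\noindent\textbf{Step 2: passage to $\Wf(-1,1)$.} First I record the symmetric analogue of Lemma~\ref{uplift}: if $(\ref{toproveSymm})$ holds on $B\subset A\subset\Wf(-1,1)$ and every $u\in A$ admits $u_{k}\in B$ with $(\ref{convergence})$, then $(\ref{toproveSymm})$ holds on $A$; indeed $I(\mathfrak a,\overline{u_{k}})\le I(\mathfrak a,u_{k})\to I(\mathfrak a,u)$, while $u_{k}\to u$ in $\W$ forces $\overline{u_{k}}\rightharpoondown\overline u$ by \cite[Theorem~1]{Br}, so Lemma~\ref{lowersemi} gives $I(\mathfrak a,\overline u)\le\liminf I(\mathfrak a,\overline{u_{k}})\le I(\mathfrak a,u)$. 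It then remains to approximate a given $u\in\Wf(-1,1)$ with $I(\mathfrak a,u)<\infty$ by piecewise linear functions realizing $(\ref{convergence})$. Here the geometry of an even convex weight is exactly right: $\mathfrak a(\cdot,v)$ decreases on $[-1,0]$ and increases on $[0,1]$, while $u(-1)=0$ (so on $[-1,0]$ the weight is decreasing and $u$ is pinned at the left end) and $u(1)=0$ (so on $[0,1]$ the weight is increasing and $u$ is pinned at the right end). In both cases the remark following Lemma~\ref{Wapprox} supplies one-sided Lipschitz approximants with $(\ref{convergence})$ on the respective half; gluing them with a short linear bridge near $x=0$ of vanishing slope (so its effect on the $\W$-norm and on the integral is negligible) produces Lipschitz $u_{h}\to u$ in $\Wf(-1,1)$ with $I(\mathfrak a,u_{h})\to I(\mathfrak a,u)$. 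Finally, as in the corollary after Lemma~\ref{uplift}, each Lipschitz function is approximated by $C^{1}$ and then piecewise linear functions with $(\ref{convergence})$, using \cite[Section~6.6, Theorem~1]{Gariepy} and dominated convergence. Combining this with Step~1 and the uplift lemma completes the proof.

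\smallskip\noindent The crux is the pointwise inequality of Step~1: the pairing $y_{k}\leftrightarrow y_{m+1-k}$ — rather than a naive adjacent pairing or an induction peeling off two consecutive points, both of which lose too much slack — is what lets evenness and convexity alone suffice, since it centers each pair about the origin, where $\mathfrak a$ is minimal, yet still retains enough spread to dominate $\mathfrak a(S,v)$. The only other point needing care is matching the two one-sided approximations on $[-1,0]$ and $[0,1]$ inside $\Wf(-1,1)$ without destroying $(\ref{convergence})$.
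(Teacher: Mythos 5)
Your Step 1 is correct and is in fact more self-contained than the paper, which at this point simply quotes \cite{Br} for Lipschitz (hence piecewise linear) functions $u$: the reduction via Jensen's inequality to the pointwise estimate $\sum_{k}\mathfrak a(y_k,v)\ge 2\mathfrak a(S,v)$, and its proof by pairing $y_k$ with $y_{m+1-k}$ (evenness plus convexity give $\mathfrak a(y_k,v)+\mathfrak a(y_{m+1-k},v)\ge 2\mathfrak a\bigl(\tfrac{y_{m+1-k}-y_k}{2},v\bigr)$, and monotonicity of $\mathfrak a(\cdot,v)$ on $[0,1]$ handles $S\le\tfrac{y_m-y_1}{2}$) is sound. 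The symmetric uplift lemma and the use of the remark after Lemma \ref{Wapprox} on the two halves $[-1,0]$ and $[0,1]$ also follow the paper's strategy.

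The genuine gap is the gluing at $x=0$. The two one-sided approximants agree with $u$ only at the pinned endpoints $\mp1$, so they disagree at $0$, and your fix --- ``a short linear bridge near $x=0$ of vanishing slope, whose effect on the integral is negligible'' --- is not justified. If the bridge occupies $[-\eta_h,0]$, its slope is of order $\frac{1}{\eta_h}\bigl(\,\bigl|u_h^+(0)-u_h^-(0)\bigr|+\int_{-\eta_h}^0\bigl|{u_h^-}'\bigr|\,\bigr)$, which need not tend to zero (e.g. when $u'$ is unbounded near $0$); and even when the $\W$-error is negligible, the bridge's contribution to the functional need not be: $F$ is not uniformly continuous in its second argument over unbounded slopes, and finiteness of $I(\mathfrak a,u)$ gives no integrable majorant once the slope and the weight are perturbed near $x=0$, where $\mathfrak a(0,u(0))$ need not vanish. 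This ``obviously negligible junction'' is precisely the Lavrentiev-type gap in \cite{Br} that the paper sets out to repair, so it cannot be waved away here. The paper's own proof avoids it by first applying Lemma \ref{zeroApprox} with $W=\{u(0)\}$: the modified weights $\mathfrak b_{\ell}\le\mathfrak a$ vanish identically for all levels $v$ in a neighborhood of $u(0)$, so the junction (whatever its slope) lies at such levels and contributes nothing; then Step 1 of the proof of Theorem \ref{mainThm} applies to each $\mathfrak b_{\ell}$, and one passes to the limit $\ell\to\infty$ using part 3) of that lemma. Your argument becomes correct once this weight-modification step is inserted before the gluing.
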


\begin{proof}
As we mentioned in the introduction,
the statement is proved for Lipschitz functions $u$ in paper \cite{Br}.
Thus, we need only to extend it to $\W$-functions.

The case of convex weight is much simpler than the case considered in Section \ref{moveForth}.
Namely, the function $\mathfrak a$ decreases for $x < 0$ and increases for $x > 0$ regardless of $v$.
Thus, the assumption $(H6)$ of Theorem \ref{mainThm} is satisfied.
To fulfil the assumption $(H7)$ we apply Lemma \ref{zeroApprox} with $W = \{ u(0) \}$.
Then we can use immediately Step 1 of the proof of Theorem \ref{mainThm}.
This gives us (\ref{toproveSymm}).
Since Step 1 uses assumptions $(H1)$, $(H6)$, $(H7)$ only,
we do not need to check $(H2)-(H5)$.
\end{proof}

\vskip 40pt

We are grateful to Professor V.G.~Osmolovskii for valuable comments,
which helped to improve the text of the paper.

Authors were supported by RFBR grant 12-01-00439.
The second author was also supported by St. Petersburg University grant 6.38.670.2013.


\begin{thebibliography}{99}
\bibitem{ASC} G.~Alberti,~F.~Serra~Cassano: Non-occurrence of gap for one-dimentional autonomous functionals,
Proceedings of ``Calc. Var., Homogen. and Cont. Mech.'', G.~Bouchitt\'e, G.~Buttazzo, P.~Suquet, ed.: World Sci., Singapore, p.~1--17, 1994
\bibitem{Br} F.~Brock: Weighted Dirichlet-type inequalities for Steiner symmetrization,
Calc. Var. and PDEs~{\bf8}, p.~15--25, 1999
\bibitem{Kawohl} B.~Kawohl: Rearrangements and convexity of level sets in PDE,
Lecture notes in mathematics {\bf1150}. Berlin; Springer Verlag, 1985. 134~pp.
\bibitem{Lan} R.~Landes: Some remarks on rearrangements and functionals with non-constant density,
Math.~Nachr.~{\bf280}, N5--6, p.~560--570, 2007
\bibitem{DAN} S.~Bankevich, A.~Nazarov: A generalization of the P\'olya--Szeg\"o inequality for one-dimensional functionals,
Doklady RAN~{\bf438}, N1, p.~11--13, 2011 (Russian).
English translation in:
Doklady Mathematics~{\bf83}, N3, p.~287--289, 2011
\bibitem{BGH} G.~Buttazzo, M.~Giaquinta, S.~Hildebrandt: One-Dimentional Variational Problems. An Introduction,
Oxford Lecture Series in Mathematics and Its Applications {\bf15},
Oxford University Press, New York, 1998. viii+262~pp.
\bibitem{Zh1} V.~Zhikov: Weighted Sobolev spaces,
Mat. Sb.~{\bf189}, N8, p.~27--58, 1998 (Russian).
English translation in:
Sb. Math.~{\bf189}, N8, p.~1139--1170, 1998
\bibitem{Zh2} V.~Zhikov: To the Problem of Passage to the Limit in Divergent Nonuniformly Elliptic Equations,
Funkts. Anal. Prilozh.~{\bf35}, N1, p.~23--39, 2001 (Russian).
English translation in:
Funct. Anal. Appl.~{\bf35}, N1, p.~19--33, 2001
\bibitem{LL} E.~Lieb, M.~Loss: Analysis, second edition,
American Mathematical Soc., 2001. 346~pp.
\bibitem{Rudin} W.~Rudin: Functional analysis,
McGraw-Hill, 1991. 424~pp.
\bibitem{Gariepy} L.C.~Evans, R.F.~Gariepy: Measure Theory and Fine Properties of Functions,
CRC Press, 1991. 288~pp.

\end{thebibliography}
\end{document}